\documentclass[12pt]{amsart}
\usepackage{amsmath, amssymb}
\newtheorem{theorem}{\sc Theorem}[section]
\newtheorem{lemma}[theorem]{\sc Lemma}
\newtheorem{proposition}[theorem]{\sc Proposition}

\newtheorem{example}[theorem]{\sc Example}

\newcommand{\ep}{\epsilon}

\newcommand{\pr }{\mathrm{Pr} }

\makeatletter
\@namedef{subjclassname@2020}{\textup{2020} Mathematics Subject Classification}
\makeatother

\usepackage{abstract}

\title[Commuting probability]{Finite groups with high commuting probability for Sylow subgroups}

\author[E. Detomi]{Eloisa Detomi}
\address{Eloisa Detomi: Dipartimento di Matematica \lq\lq Tullio Levi-Civita\rq\rq, Universit\`a degli Studi di Padova, Via Trieste 63, 35121 Padova, Italy} 
\email{eloisa.detomi@unipd.it}

\author[D. Senise]{D\'ebora Senise} 
\address{D\'ebora Senise: Department of Mathematics, University of Brasilia, Brasilia, DF, Brazil}
\email{deborasenise2502@gmail.com}

\author[P. Shumyatsky]{Pavel Shumyatsky} 
\address{Pavel Shumyatsky: Department of Mathematics, University of Brasilia, Brasilia, DF, Brazil}
\email{pavel@unb.br}

\subjclass[2020]{ 20D20, 20D25, 20P05}
\keywords{Finite groups, Commuting probability, centralizers}

\begin{document}

\maketitle

\begin{abstract}
\noindent Given two subsets $X,Y$ of a finite group $G$, we write $\Pr(X,Y)$ for the probability that random elements $x \in X$ and $y \in Y$ commute. If $X,Y$ are subgroups, we denote by $\Pr^*(X,Y)$ the maximum real number $\epsilon$ with the property that for every pair 
of distinct primes $p\in\pi(X)$ and $q\in\pi(Y)$ there is a Sylow 
$p$-subgroup $P$ of $X$ and a Sylow $q$-subgroup $Q$ of $Y$ such that 
$\Pr(P,Q) \geq \epsilon$.

\noindent In this paper we handle, among other things, finite groups $G$ with high probabilities $\pr^*(T,G)$, where $T$ is either a term of the lower central series of $G$ or the generalized Fitting subgroup $F_i^*(G)$. Our main results show that the structure of such groups is similar, in some precise sense, to that of nilpotent groups.
\end{abstract}

\section{Introduction}

Given two subsets $X, Y$ of a finite group $G$, we write $\Pr(X,Y)$ for
the probability that random elements $x \in X$ and $y \in Y$ commute.
The number $\Pr(G,G)$ is called the commuting probability of $G$. There is an extensive recent literature dedicated to this subject.
It is well-known that $\Pr(G,G) \leq 5/8$ for any nonabelian group $G$. 
Another important result is the theorem of P.~M.~Neumann~\cite{neumann1989two}, 
which states that if $G$ is a finite group and $\epsilon$ is a positive number such that
$\Pr(G,G) \geq \epsilon,$ then  $G$ has a normal subgroup  $R$ such that both the index $|G:R|$ and the order of the commutator subgroup $[R,R]$ are $\epsilon$-bounded.
Throughout the article we use the expression ``$(a,b,\ldots)$-bounded''
to mean that a quantity is bounded from above by a number depending only on the parameters $a,b,\ldots$.
A number of further results on the commuting probability in finite
groups can be found in \cite{burness2023,ds22,dlms23,ref_7eberhard2015, erfanian2007, martinez2024}.

Given a finite group $G$ and a prime $p\in\pi(G)$, we write $Syl_p(G)$ for the set of Sylow $p$-subgroups of $G$. If $X,Y$ are subgroups of $G$, we denote by $\Pr^*(X,Y)$ the maximum real number $\epsilon$ with the property that for every pair 
of distinct primes $p\in\pi(X)$ and $q\in\pi(Y)$ there is $P\in Syl_p(X)$ and $Q\in Syl_q(Y)$ such that $\Pr(P,Q) \geq \epsilon$.

It is easy to see that $G$ is nilpotent if and only if $\Pr^*(G,G)=1$, that is, if and only if coprime Sylow subgroups of $G$ commute. 
The paper \cite{dlms23} handles finite groups $G$ in which the condition holds with high probability. It is shown that if $\Pr^*(G,G)\geq\epsilon>0$, then the index $|G:F_2(G)|$ is $\epsilon$-bounded. Moreover, if for any $p\in\pi(G)$ there is $P\in Syl_p(G)$ and a Hall $p'$-subgroup $H\leq G$ such that $\Pr(P,H)\ge\epsilon$, then $|G:F(G)|$ is $\epsilon$-bounded.  Here and throughout $F(G)$ stands for the Fitting subgroup and $F_i(G)$ for the $i$th term of the upper Fitting series of $G$. 

Let $\gamma_{\infty}(G)$ be the intersection of the lower central series of a group $G$. The present paper grew out of the observation that a finite group $G$ is nilpotent if and only if  $\pr^*(\gamma_{\infty}(G),G)=1$. Here we explore finite groups with high commuting probability $\pr^*(\gamma_{\infty}(G),G)$ or $\Pr(P,G)$, where $P$ runs over the set of Sylow subgroups of $\gamma_{\infty}(G)$. Recall that the exponent of a finite group $G$ is the minimum number $e$ such that $x^e=1$ for all $x\in G$.

\begin{theorem} \label{expG/F(G)} Let $\epsilon>0$.   Let $G$ be a finite group and $T = \gamma_{\infty}(G)$. Assume that  $\pr^*(T,G)\ge\epsilon$. Then 
\begin{enumerate}
\item The index $|G:F_2(G)|$ is $\epsilon$-bounded; 
\item The exponent of $G/F(G)$ is $\epsilon$-bounded. 
\end{enumerate}
\end{theorem}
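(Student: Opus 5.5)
The plan is to reduce as much as possible to the result of \cite{dlms23}: if $\Pr^*(G,G)\ge\epsilon$ then $|G:F_2(G)|$ is $\epsilon$-bounded. The extra work is to control the nilpotent quotient $G/T$.

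\textbf{Step 1: an elementary dichotomy.} Let $P\in Syl_p(T)$ and $Q\in Syl_q(G)$ with $p\neq q$. Every $x\in P\setminus C_P(Q)$ satisfies $|C_Q(x)|\le |Q|/q$, and $|C_P(Q)|\le |P|/p$ if $C_P(Q)\ne P$. Hence either $[P,Q]=1$ or
\[
\Pr(P,Q)\le \frac1p+\frac1q .
\]
So if $\Pr(P,Q)\ge\epsilon$ and $\min(p,q)>2/\epsilon$, then $P$ and $Q$ commute. Let $\pi_0$ be the set of primes at most $2/\epsilon$; this is a bounded set. Then for distinct $p\in\pi(T)\setminus\pi_0$ and $q\in\pi(G)\setminus\pi_0$ there are $P\in Syl_p(T)$ and $Q\in Syl_q(G)$ with $[P,Q]=1$.

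\textbf{Step 2: pass to $T$ and prove (1).} I will use the standard monotonicity fact that $\Pr(H,K)\ge \Pr(H,L)$ whenever $K\le L$; this follows by counting orbits, or from $|C_K(x)|/|K|\ge |C_L(x)|/|L|$. Since $Q\cap T$ need not be a Sylow subgroup of $T$, I instead fix a Sylow $q$-subgroup $Q_T$ of $T$ and choose $Q\in Syl_q(G)$ containing it, which is possible after conjugating $P$ accordingly. This gives $\Pr(P,Q_T)\ge\Pr(P,Q)$, but the conjugation must be handled with care. If this causes difficulties, I will use the commuting conclusion of Step 1 directly for large primes, together with the Neumann-type form of \cite{dlms23} for small primes. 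In either form the aim is $\Pr^*(T,T)\ge\epsilon$, so that \cite{dlms23} gives that $|T:F_2(T)|$ is $\epsilon$-bounded.

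Next, $F(T)\le F(G)$, and $F_2(T)F(G)/F(G)$ is a normal nilpotent subgroup of $G/F(G)$, so $F_2(T)\le F_2(G)$. In $\bar G=G/F_2(G)$ the image $\bar T$ therefore has bounded order and $\bar G/\bar T$ is nilpotent. It remains to bound $|\bar G:\bar T|$. Because $\bar T$ is bounded, $C_{\bar G}(\bar T)$ has bounded index. Moreover, $C_{\bar G}(\bar T)$ is nilpotent modulo the bounded group $Z(\bar T)$; one checks that this makes it nilpotent, using $\gamma_\infty(C)\le \bar T\cap C$. Its preimage then lies in $F_2(G)$ up to a bounded section. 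This argument is the delicate part and needs a careful Fitting-height computation; the alternative is to work with Sylow subgroups of $G/T$ using Step 1.

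\textbf{Step 3: prove (2).} The exponent of $G/F(G)$ is controlled by its primes and by the exponents of its Sylow subgroups. By (1), $G/F_2(G)$ is bounded, so it suffices to bound the exponent of $F_2(G)/F(G)$. For a large prime $q$, Step 1 shows that Sylow $q$-subgroups of $G$ centralize Sylow $p$-subgroups of $T$ for all large $p\ne q$. For the small primes $p\in\pi_0$, I will use $\Pr(P,Q)\ge\epsilon$ in its Neumann-type form. This produces a subgroup of bounded index in $Q$ that centralizes a subgroup of bounded index in $P$, up to a bounded commutator. Combining this with the coprime-action fact that $[P,Q,Q]=[P,Q]$, it should follow that $Q$ acts on $F(G)$ with the "large" part acting trivially. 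Consequently the images of $q$-elements in $G/F(G)$ have bounded order, and for large $q$ they are trivial, because $G/T$ is nilpotent and $q$-elements outside $T$ act trivially on the $q'$-parts of $F(G)$.

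The main obstacle is this last step for primes in $\pi_0$. There the only available information is the probabilistic bound, not commutation, so one must show that a bounded-index subgroup of a Sylow $q$-subgroup of $G$ acts trivially on $F(G)/\Phi$-type sections. I would first try to reduce, via Hall–Higman type arguments on $q$-elements acting on $p$-groups, to the case where $F(G)$ is a direct product of elementary abelian groups. I would then bound the orders of the relevant $q$-elements by the index $|Q:C_Q(P_0)|$ supplied by the Neumann-type lemma.
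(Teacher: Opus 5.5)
\textbf{The gap is in Step 2, and it cannot be fixed by a more careful Fitting-height computation.} Getting $|T:F_2(T)|$ bounded from \cite{dlms23} is fine. Your worry there is also unfounded: since $T\trianglelefteq G$, we have $Q\cap T\in \mathrm{Syl}_q(T)$ for every $Q\in\mathrm{Syl}_q(G)$, so $\Pr^*(T,T)\ge\epsilon$ follows at once from Lemma~\ref{subgroups}(2).

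After that, however, you only use two facts: $|T:F_2(T)|$ is bounded and $G/T$ is nilpotent. Your centralizer argument is correct as far as it goes: $C=C_{\bar G}(\bar T)$ is a normal nilpotent subgroup of bounded index in $\bar G=G/F_2(G)$. But this only gives $C\le F(\bar G)=F_3(G)/F_2(G)$, so it bounds $|G:F_3(G)|$. The claim that the preimage lies in $F_2(G)$ ``up to a bounded section'' does not follow.

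In fact those two facts cannot bound $|G:F_2(G)|$. Take $G=S_4\times\dots\times S_4$ with $n$ factors. Then $T=A_4^n$, $F_2(T)=T$, and $G/T\cong C_2^n$ is nilpotent. Here $\bar T=1$ and $C=\bar G$, yet $|G:F_2(G)|=2^n$. So the hypothesis must be used again, on a Sylow subgroup of $G$ not contained in $T$ acting on $T\cap F(G)$. Your Step 1 cannot supply this, because it needs both primes to exceed $2/\epsilon$.

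\textbf{The missing idea is to transfer information from $S\cap T$ to $S$,} where $S$ is the Sylow $r$-subgroup of $F(G)$; the paper uses this in both parts.

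\begin{enumerate}
\item Since $S\cap T\trianglelefteq G$, it lies in every Sylow $r$-subgroup of $T$. Hence $\Pr(P,S\cap T)\ge\epsilon$ for a suitable $P\in\mathrm{Syl}_p(G)$, $p\neq r$.
\item Write $T=\gamma_c(G)$. Then $[S,P,\dots,P]$ with $c-1$ copies of $P$ lies in $S\cap T$. Coprime action gives $[S,P]=[S,P,\dots,P]$ for any number of copies.
\item For $p>(2/\epsilon)^{6/\epsilon}$, Lemma~\ref{lm2.10}(1) gives $[S\cap T,P]=1$, hence $[S,P]=1$. Only the acting prime has to be large.
\item In general, Lemma~\ref{lemma2.8} together with the bounded-commutator Lemma~\ref{lemma4.4} shows that $|P:C_P(S)|$ is $\epsilon$-bounded; this is Lemma~\ref{boundindex}.
\end{enumerate}

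Now reduce to $O_p(G)=1$ and use $C_G(F(G))\le F(G)$ for soluble $G$. In Part (1), once only boundedly many primes remain, this bounds $|P|$; the insoluble part is handled through the soluble radical, with Lemma~\ref{Eb} applied to the socle, which lies in $T$. In Part (2), it bounds the exponent of $P$, since $x^{p^m}\in C_P(F(G))$.

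Your Step 3 mentions $[P,Q,Q]=[P,Q]$ and the nilpotency of $G/T$, but never states the key inclusion $[S,Q,\dots,Q]\le S\cap T$. Your assertion that $q$-elements act trivially on the $q'$-parts of $F(G)$ is not justified. The small-prime case is explicitly left open. Finally, Step 3 depends on Part (1), which your argument has not established.
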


\begin{theorem} \label{Hall1} Let $\epsilon>0$. Let $G$ be a finite soluble group and $T = \gamma_{\infty}(G)$. Suppose that for any prime $p \in \pi(G)$ there is $P\in Syl_p(G)$ and a Hall $p'$-subgroup $H$ of ${T}$ such that $\Pr(H,P)\geq\epsilon$. Then the index $|G:F(G)|$ is $\epsilon$-bounded.
\end{theorem}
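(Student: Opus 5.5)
Since $G$ is soluble, $T=\gamma_\infty(G)$ is the nilpotent residual. The plan is to show that $|G:F(G)|$ is bounded by bounding $|T:F(T)|$, and then bounding $|G:TF(G)|$ in terms of the action on $T$. I will use the standard tools from the commuting probability literature. One is Neumann-type results: if $\Pr(H,P)\ge\epsilon$, then there are a subgroup $P_0\le P$ of $\epsilon$-bounded index and a subgroup $H_0\le H$ of $\epsilon$-bounded index such that $[H_0,P_0]$ has $\epsilon$-bounded order. A simpler version is that $\Pr(H,P)\ge\epsilon$ implies $|P:C_P(h)|\le 1/\epsilon'$ for a proportion at least $\epsilon/2$ of the elements $h\in H$. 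I will also use the result quoted from \cite{dlms23}: if for every $p$ some Sylow $p$-subgroup and some Hall $p'$-subgroup of a group satisfy $\Pr\ge\epsilon$, then the Fitting index is bounded.

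\textbf{Step 1 (reduction to the case $F(G)\cap T$ controls everything).} Pass to $\bar G=G/\Phi(G)$, or more precisely reduce modulo $\Phi(F(G))$. Then $F(\bar G)$ is a direct product of elementary abelian minimal normal subgroups, and $G/F(G)$ acts faithfully on it. Since the hypothesis is inherited by quotients and $F(G/\Phi(G))=F(G)/\Phi(G)$, we may assume $\Phi(G)=1$. In that case $F(G)=V$ is a completely reducible $G$-module. Write $V=C_V(T)\oplus[V,T]$; we have $[V,T]\le T$ because $T$ is normal and $V=[V,G]\oplus$ (central part). Since $G/C_G(V)$ is faithful on $V$, it suffices to bound the index of the kernel of $G$ acting on each component $[V,T]_p$ (the $p$-part), and to bound the number of primes $p$ for which this action is nontrivial.

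\textbf{Step 2 (the key estimate).} Fix $p$ and $W=[V,T]_p\le T\cap O_p(G)$. By hypothesis some $P\in Syl_p(G)$ and Hall $p'$-subgroup $H$ of $T$ satisfy $\Pr(H,P)\ge\epsilon$. Now $W\le P$, and $H$ acts coprimely on $W$. Hence a positive proportion of $h\in H$ satisfy $|W:C_W(h)|\le\epsilon$-bounded. By coprime action, $W=C_W(H)\times[W,H]$. Using that $W=[W,T]$ and that $T=O^{p}(T)\cdot$(stuff generated by $p'$-elements acting), $[W,H]=W$ for the $p'$-part of the action. I then want to deduce that $|H:C_H(W)|$ is $\epsilon$-bounded. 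Elements with small $[W,h]$ cover a positive proportion of $H$, so at most $\epsilon$-boundedly many of them generate $H$ modulo a small subgroup. Each has $\epsilon$-bounded $|[W,h]|$, so $[W,H]$ restricted to a bounded-index subgroup has bounded order. Combined with faithfulness and complete reducibility, this bounds the image of $H$ in $\mathrm{Aut}(W)$ and forces $|W|$ to be $\epsilon$-bounded whenever the action is nontrivial. This step is the main obstacle: I must handle the distinction between "small commutator" and "trivial action", and produce a bound independent of $p$. I will try the Neumann-type theorem for the pair $(H,P)$ directly, obtaining $H_0$ of bounded index with $[H_0,W]$ of bounded order. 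A normal-closure argument then shows $[H_0,W]$ is a bounded normal section; by complete reducibility, irreducible pieces outside it are centralized by $H_0$.

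\textbf{Step 3 (assembling).} From Step 2, nontrivial $p$-components $[V,T]_p$ have bounded order. Moreover, primes $p$ with a nontrivial component must divide $|T/(T\cap V)|$ via $H$ acting. Combining with Theorem \ref{expG/F(G)} (bounded $|G:F_2(G)|$ and bounded exponent of $G/F(G)$), the primes in $\pi(G/F(G))$ are bounded. Then bounded exponent plus boundedly many primes and bounded components gives a bounded $|T:C_T(V)|$, hence $|T:T\cap F(G)|$ is bounded. Finally, $G/T$ is nilpotent, and $G$ acts on $V=[V,T]\oplus C_V(T)$. On $C_V(T)$ the action factors through the nilpotent $G/T$ and must be trivial by complete reducibility together with $V\cap$ nilpotent residual considerations, since $G$ acts on $C_V(T)$ via a nilpotent quotient which, acting faithfully and completely reducibly on $p$-groups, centralizes $V_p$ modulo $O_p$. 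On $[V,T]$, which has boundedly many bounded components, the action gives a bounded image. Thus $|G:C_G(V)|=|G:F(G)|$ is $\epsilon$-bounded.
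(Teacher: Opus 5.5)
\textbf{There is a genuine gap.} It sits exactly where the content of the theorem lies.

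\emph{Main gap (Step 3).} You assert that $G$ acts trivially on $C_V(T)$. This is false already for $G=S_3$, or any dihedral group $D_{2p}$. There $\Phi(G)=1$ and $V=F(G)=T$ is abelian, so $C_V(T)=V$ and $[V,T]=1$, yet $G/F(G)$ acts nontrivially on $V$. In general, let $W$ be an irreducible $G$-summand of $C_V(T)$ with $[W,G]\neq 1$. Then $W=[W,G,\dots,G]\le\gamma_\infty(G)$, so $W\le Z(T)$. Moreover $G/C_G(W)$ is a nilpotent $p'$-group: complete reducibility kills only $O_p$ of the acting group, not its $p'$-part. So this is precisely the part of the action that must be bounded.

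Bounding it needs the hypothesis in the direction ``Sylow $q$-subgroup $Q$ of $G$ against the $q'$-part of $T\cap F(G)$''. That part is a normal $q'$-subgroup of $T$, so it lies in every Hall $q'$-subgroup of $T$. Your Step 2 only uses the opposite direction: the Hall $p'$-subgroup of $T$ acting on a $p$-section $W\le P$. That says nothing about how elements outside $T$ act on $C_V(T)$.

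\emph{Secondary gap (Step 2).} Step 2 also overclaims: nontrivial components $[V,T]_p$ need not have bounded order. Take $G=U^r\rtimes S_3$, with $U$ the natural $2$-dimensional $\mathbb{F}_2S_3$-module. Then $\Phi(G)=1$, $F(G)=U^r$ and $T=U^r\rtimes C_3$. The hypothesis holds with $\epsilon=1/3$ for both primes $2$ and $3$, since the identity element alone contributes $1/3$. Yet $[V,T]_2=U^r$ has order $4^r$.

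What a Lemma~\ref{boundindex}-type argument actually gives you is that $|H:C_H(W)|$, and hence $|T:C_T(W)|$, is bounded. But bounded images on unboundedly many components, even with bounded exponent, do not bound $|T:C_T(V)|$. So the deduction in Step 3 fails independently of the $C_V(T)$ issue.

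\emph{How the paper avoids this.} It needs no reduction modulo $\Phi(G)$ and no splitting of $V$. For each prime $p$ it passes to $G/O_p(G)$, so that $F(G)$ is a $p'$-group. It observes that $T\cap F(G)\le H$, hence $\Pr(T\cap F(G),P)\ge\epsilon$. It then applies Lemma~\ref{boundindex}, whose point is that $[F(G),P,\dots,P]\le T\cap F(G)$ combined with coprime action controls all of $F(G)$, not just its part inside $T$. This gives $|P|=|P:C_P(F(G))|$ bounded uniformly in $p$.
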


Somewhat surprisingly, it is not true that if for each  $p\in \pi(T)$ there is $P\in Syl_p(T)$ and a Hall $p'$-subgroup $H$ of $G$ such that $\Pr(P,H) \geq\epsilon$, then $|G:F(G)|$ is $\epsilon$-bounded (see Example \ref{exem1}). 
 On the other hand, it will be shown that under this hypothesis the exponent of $G/F(G)$ is $\epsilon$-bounded. 

The following result was obtained in \cite{ds22}: Suppose that $\Pr(P,G)\ge\epsilon$ whenever $P$ is a Sylow subgroup of $G$. Then $G$ has a normal subgroup $R$ such that both the index $|G:R|$ and the order of the commutator subgroup $[R,R]$ are $\epsilon$-bounded. Let $k\geq1$, and let $\gamma_k(G)$ be the $k$th term of the lower central series of a group $G$. Here we prove

\begin{theorem} \label{newTh} Let $G$ be a finite group and $T=\gamma_k(G)$ for some $k\ge1$. Suppose that $\Pr(P,G)\ge\epsilon>0$ whenever $P$ is a Sylow subgroup of $T$. Then $G$ has a nilpotent normal subgroup $R$ of nilpotency class at most $k+1$ such that both the exponent of $G/R$ and the order of $\gamma_{k+1}(R)$ are $\epsilon$-bounded.
\end{theorem}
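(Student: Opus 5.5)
We work prime by prime and then glue. Set $T=\gamma_k(G)$. For each prime $p$ and $P\in Syl_p(T)$ the hypothesis $\Pr(P,G)\ge\epsilon$ gives, by the standard counting argument (as in Neumann's theorem), a subgroup structure. There is an $\epsilon$-bounded number of conjugacy classes of $G$ meeting $P$ in at least an $\epsilon/2$ proportion. Equivalently, the set $X_P=\{x\in P: |x^G|\le 2/\epsilon\}$ satisfies $|X_P|\ge(\epsilon/2)|P|$. By a lemma of the type used in \cite{ds22} (a product of an $\epsilon$-bounded number of elements of $X_P$ covers the subgroup they generate), the subgroup $P_0=\langle X_P\rangle$ has $\epsilon$-bounded index in $P$. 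Moreover, each element of $P_0$ is a product of boundedly many elements with boundedly many conjugates, so $|P_0^G|$-type estimates give that $|G:C_G(x)|$ is $\epsilon$-bounded for all $x\in P_0$.

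\emph{Step 1: bounded index in $T$.} Let $D$ be the subgroup generated by all $x\in T$ of $p$-power order, over all primes $p$, with $|x^G|\le m$ for a suitable $\epsilon$-bounded $m$. By a BFC-type argument (Neumann/Wiegold), $D$ is normal and $[D,G]$-type commutators involving these elements have bounded order. In particular $|D'|$ is bounded, and $D\cap Z$ appears with $G/C_G(D)$ of bounded exponent. Since every Sylow $p$-subgroup $P$ of $T$ has $|P:P\cap D|$ bounded and $p$ is bounded whenever $P\ne P\cap D$, the exponent of $T/D$ is $\epsilon$-bounded. We do not get a bounded index, which is why the statement speaks of exponent.

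\emph{Step 2: build $R$.} Let $N=[D,G]$. Elements of $D$ have boundedly many conjugates and $D$ is generated by such elements closed under conjugation, so Neumann's BFC theorem applies. Taking the finite set of $G$-conjugates involved, $[D,G]$ has $\epsilon$-bounded order. (To control the number of generators, first pass to the subgroup generated by boundedly many elements with bounded-index centralizers.) Then $D/N$ is central in $G/N$. Put $R=C_G(D/N)\cap C_G(N)$ modulo adjustment. The key point is that $\gamma_k(G)/D$ has bounded exponent, so $\gamma_k(G/D)$ has bounded exponent. From this one wants the exponent of $(G/D)/Z_{k}$-type quotient to be bounded, via the standard result that if $\gamma_{k}(G)$ has exponent $e$ then $G$ has a normal subgroup $S$ with $G/S$ of $(e,k)$-bounded exponent and $S$ nilpotent of class $\le k-1$ modulo bounded pieces (a Baer-type theorem for exponent). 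Combining with $D/N$ central and $|N|$ bounded, we take $R$ to be the preimage of $S$ intersected with $C_G(N)$. Then $G/R$ has bounded exponent, since $|G:C_G(N)|$ is bounded, and $\gamma_{k+1}(R)\le N$ has bounded order. Replacing $R$ by the centralizer chain makes $R$ nilpotent of class $\le k+1$.

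\emph{Main obstacle.} The hard step is the exponent analogue of Baer's theorem: passing from ``$\gamma_k(G)$ is central-by-(bounded exponent)'' to a normal subgroup $R$ of class $\le k+1$ with $G/R$ of bounded exponent. I would first try to use coprime action: for $p$ exceeding the bound, Sylow $p$-subgroups of $T$ lie in $D$, so $G/N$ acts on $T/D$ only through bounded primes. For large primes, the $p$-parts then commute with $T$ modulo $D$. This gives $R$ as the product of $O_{p}$-parts for large $p$ together with a bounded-exponent correction.
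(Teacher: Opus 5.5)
Your proposal has a genuine gap at the central step.

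\textbf{The step that fails.} In Step 2 you assert that elements of $D$ have boundedly many conjugates, and conclude from Neumann's BFC theorem that $[D,G]$ (and $D'$) has $\epsilon$-bounded order. But only the \emph{generators} of $D$ have at most $m$ conjugates, and there are unboundedly many of them. BFC-type conclusions need either a uniform bound on class sizes of all elements, or boundedly many such generators. So neither $|D'|$ nor $|[D,G]|$ bounded follows.

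The second claim is in fact false. Take Example~\ref{exem1} with $k=2$. There $T=C_{p_1}\times\cdots\times C_{p_s}$, and every nontrivial element of $C_{p_i}$ has exactly $2$ conjugates in $G$, so $D=T$. Then $N=[D,G]=T$ has order $p_1\cdots p_s$. Moreover $C_G(N)=T$ has index $2^s$, so your deduction that ``$G/R$ has bounded exponent since $|G:C_G(N)|$ is bounded'' collapses. Your parenthetical about passing to boundedly many generators cannot help, because $D$ is not generated by boundedly many such elements. (As an aside, Step 1 actually gives bounded \emph{index} $|T:D|$: every Sylow subgroup of $T/D$ has order at most $2/\epsilon$.)

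\textbf{The unproved step.} Even granting the reductions, the ``exponent analogue of Baer's theorem'' is not a standard result, and you do not prove it. Your closing sketch gives no mechanism that produces a \emph{nilpotent} $R$ of class at most $k+1$ with $|\gamma_{k+1}(R)|$ bounded.

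\textbf{How the paper avoids this.} It never uses a global BFC argument; it localizes at each prime instead.
\begin{enumerate}
\item Lemma~\ref{star} gives $\Pr^*(\gamma_\infty(G),G)\ge\epsilon$. Theorem~\ref{expG/F(G)}, proved by coprime action via Lemma~\ref{boundindex}, then shows that $G/F(G)$ has $\epsilon$-bounded exponent.
\item For each Sylow subgroup $S_p$ of $F(G)$, Proposition~\ref{prop1.2} applied to $U_p=T\cap S_p$ yields $H\trianglelefteq G$ and $B_p\le U_p$ such that $|G:H|$, $|U_p:B_p|$ and $|[H,B_p]|$ are bounded.
\item Put $H_p=H\cap S_p$. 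Every $x\in H_p$ has boundedly many $\gamma_k(S_p)$-conjugates, so Theorem~\ref{Th3.1} bounds $|\gamma_{k+1}(H_p)|$.
\item Then $L_p=C_{H_p}(\gamma_{k+1}(H_p))$ is normal in $G$, of class at most $k+1$, and of bounded index in $S_p$.
\item The bounds do not depend on $p$, so $L_p=S_p$ and $\gamma_{k+1}(L_p)=1$ for all large $p$. Hence $R=\prod_p L_p$ has bounded index in $F(G)$, and $G/R$ has bounded exponent.
\end{enumerate}
Theorem~\ref{Th3.1}, applied $p$-locally, is exactly the ingredient your Baer-type step is missing. Working prime by prime is what prevents unbounded products such as $[D,G]$ from ever appearing.
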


Notably, we  give an example showing that the index $|G:R|$ in Theorem \ref{newTh} cannot be bounded in terms of $\epsilon$ only (see Example \ref{exem1}).
 This came to us as a surprise since at the start of this work we had no reason to expect that the conclusion in Theorem \ref{newTh} should differ from the result in \cite{ds22}.

Recall that the generalized Fitting subgroup $F^*(G)$ is the product of the Fitting subgroup $F(G)$ and all subnormal quasisimple
subgroups; here a group is quasisimple if it is perfect and its quotient by the centre
is a nonabelian simple group. Then the generalized Fitting series of G is defined
starting from $F_1^*(G)=F^*(G)$, and then by induction, with $F_{i+1}^*(G)$ being the inverse image of $F^*(G/F_i^*(G))$. 

\begin{theorem} \label{genfit}   Let $G$ be a finite group and $T = F^*(G)$. Assume that  $\pr^*(T,G)\ge\epsilon>0$ and set $s = |\pi(F(G))| $. Then
\begin{enumerate}
\item The exponent of $G/F(G)$ is $\epsilon$-bounded. 
\item The index  $|G:F(G)|$ is $(s, \epsilon)$-bounded. 
\end{enumerate}

\end{theorem}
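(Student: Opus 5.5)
We want to prove Theorem~\ref{genfit}, where $T=F^*(G)$ and $\Pr^*(T,G)\ge\epsilon$. The plan rests on two facts. First, $C_G(F^*(G))\le F^*(G)$. Second, for a Sylow $p$-subgroup $P$ of $T$ and a $q$-element $y\in G$, the hypothesis makes $y$ centralize a large part of $P$. Indeed, $\Pr(P,Q)\ge\epsilon$ gives a Sylow $q$-subgroup $Q$ of $G$ in which a proportion at least $\epsilon/2$ of the elements $y$ satisfy $|P:C_P(y)|\le 2/\epsilon$. Such elements generate a subgroup $Q_0$ with $|Q:Q_0|$ bounded. A standard counting argument (as in \cite{dlms23}) then gives that $|P:C_P(Q_0)|$ is $\epsilon$-bounded, since a bounded number of elements generate $Q_0$ up to bounded index.

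\textbf{Reduction to $F(G)$.} We first handle the layer $E(G)$. Write $E=E(G)$ with components $S_1,\dots,S_r$. Each component contains elements of at least three prime orders. Take a prime $p$ dividing $|S_i/Z(S_i)|$ and another prime $q$ dividing $|S_i|$. The condition $\Pr(P,Q)\ge\epsilon$ for Sylow subgroups of $T$ and $G$ bounds the sizes: if $S_i$ were large, a random $p$-element of $S_i$ and a random $q$-element of $S_i$ would commute with small probability. This uses the known fact that in a nonabelian simple group, coprime Sylow subgroups commute with probability tending to $0$ as the order grows (see \cite{dlms23}). A similar argument applied to the product of the components, with $P$ the full Sylow $p$-subgroup of $E$, bounds the number of components. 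Hence $|E/Z(E)|$ is $\epsilon$-bounded. Since $G/C_G(E/Z(E))$ embeds in $\mathrm{Aut}(E/Z(E))$, it follows that $|G:C_G(E/Z(E))|$ is $\epsilon$-bounded. We then work inside $K=C_G(E/Z(E))$, whose action on $F^*(G)$ is essentially controlled by its action on $F(G)$. More precisely, $C_K(F(G))$ acts trivially on $E/Z(E)$ and on $F(G)$, so it stabilizes a normal series of $F^*(G)$. Therefore $C_K(F(G))$ modulo $F(G)$ is nilpotent and centralizes $F^*(G)$, which forces it into $F(G)$ by standard arguments. Thus it suffices to bound $K$ acting on $F=F(G)=\prod_p O_p(G)$.

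\textbf{Exponent bound.} Fix a $q$-element $y$ of $G$ and a prime $p\ne q$, and let $P=O_p(G)$, which is the Sylow $p$-subgroup of $F$. By the hypothesis some conjugate $y^g$ lies in a Sylow $q$-subgroup $Q$ with $\Pr(P,Q)\ge\epsilon$. Since $P$ is normal, the counting above yields a subgroup $Q_0\le Q$ of $\epsilon$-bounded index with $|P:C_P(Q_0)|$ bounded. Then $y^{m}\in Q_0$ for an $\epsilon$-bounded power $m$ (indeed $m$ divides $|Q:Q_0|!$ after passing to the core). Consequently $y^m$ acts on $P$ with $|P:C_P(y^m)|$ bounded. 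Next, a coprime automorphism $\alpha$ of a $p$-group $P$ with $|P:C_P(\alpha)|\le n$ has order bounded in terms of $n$. This is because $[P,\alpha]$ has $n$-bounded order by a standard lemma (Neumann-type), and $\alpha$ acts faithfully on $[P,\alpha]$ with $P=[P,\alpha]C_P(\alpha)$. So $y^{mc}$ centralizes $P$ for a bounded $c$. The same holds for $p=q$ by bounding the action on $O_q(G)$: $q$-elements act on $O_q(G)F/\ldots$ via elements which, together with $O_q$, centralize all $O_p$ with $p\ne q$, and by the structure of $C_G(F^*)$ this lies in $F$. Hence every $q$-element $y$ has $y^{e}\in C_K(F)\cdot(\text{bounded})\le F(G)$ for an $\epsilon$-bounded $e$ independent of $q$. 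Combined with the bounded index of $K$, this gives part~(1).

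\textbf{Index bound.} For part~(2), $G/F$ embeds, modulo a bounded section, into $\prod_{p\in\pi(F)}G/C_G(O_p(G))$, which is a product of $s$ factors. Each factor $G/C_G(O_p(G))$ has a normal Sylow $p$-subgroup $O_p(G/C_G(O_p(G)))$ over which we need control. For the $p'$-part we use the bounded-index centralizer argument above. For each prime $q\ne p$, a Sylow $q$-subgroup of $G/C_G(O_p)$ has a bounded-index subgroup acting with bounded $|P:C_P(\cdot)|$, and therefore has bounded order by the Neumann-type lemma. To bound the number of such primes $q$, note that an element of prime order $q$ acting nontrivially with $|P:C_P|\le n$ forces $q\le n$. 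Hence only $\epsilon$-boundedly many primes occur. The $p$-part modulo $O_p$ is handled by the same counting with the roles reversed, using $\Pr^*(T,G)$ for the pair consisting of a prime $r\in\pi(T)$, $r\ne p$, and $p$. This step needs $|\pi(F)|\ge 2$; otherwise we use that $p$-elements of $G/C_G(O_p)$ lying outside $O_p$ are trivial, since $O_p(G/C_G(O_p(G)))=1$. Multiplying the $s$ bounded factors gives the $(s,\epsilon)$-bound.

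\textbf{Main obstacle.} The hardest step is the treatment of the components, specifically showing that the number and size of the components are bounded using only coprime-prime commuting conditions. To handle it I would first try to find, in each large simple group, a pair of primes with commuting probability of Sylow subgroups tending to $0$, quoting the estimates used in \cite{dlms23} for $\Pr^*(G,G)$.
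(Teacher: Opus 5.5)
\textbf{There is a genuine gap, and it is not where you locate it.}

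The components cause no trouble. As in the paper, you transfer the hypothesis to $E/Z(E)$ by Lemma~\ref{star} and cite Lemma~\ref{Eb}. Note only that this result gives \emph{some} pair of primes for which all Sylow pairs commute with small probability; it does not say every coprime pair does.

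Your bound for a $q$-element acting on $O_p(G)$, $p\neq q$, is also fine; it is Lemma~\ref{boundindex}. Your justification (``boundedly many generators up to bounded index'') is false in general, e.g.\ for elementary abelian $Q$. The claim itself does follow from Lemma~\ref{lemma4.4} together with $P=[P,Q_0]C_P(Q_0)$.

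The failure is the sentence handling $p=q$. Suppose $y^{e_1}$ centralizes $E$ and $O_{q'}(F)$. Nothing structural then forces a bounded power of $y$ into $F$. The fact $C_G(F^*(G))\le F^*(G)$ does not help, because $y$ need not centralize $O_q(G)$. Two examples:
\begin{itemize}
\item In $S_4$ with $q=2$, a transposition centralizes $E=1$ and $O_{2'}(F)=1$, yet it is not in $F$.
\item In $\mathbb{F}_q^n\rtimes GL_n(q)$, unipotent elements also centralize $E=1$ and $O_{q'}(F)=1$, and their order modulo $F$ is unbounded.
\end{itemize}
So the hypothesis must be used a second time, now with the prime of the acting element on the $T$ side. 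This is the heart of the paper's proof.
\begin{itemize}
\item Work inside $C=C_G(E)$, take $P_0=C_P(O_{p'}(F))$, and let $G_1$ be its normal closure in $C$.
\item Then $O_{p'}(F(G_1))\le Z(G_1)$, so modulo $Z(G_1)$ one has $F^*(G_1)=F(G_1)=P_1$, a $p$-group.
\item Lemma~\ref{boundindex}, applied to $P_1$ and a Sylow $r$-subgroup $R$ of $G_1$ with $r\ne p$, bounds $|R:C_R(P_1)|$.
\item Since $C_{G_1}(P_1)\le P_1$, this forces $|R|$ to be bounded.
\item Hence a Sylow $p$-subgroup of $G_1$ has bounded index, and so does its core, which lies in $O_p(C)$.
\end{itemize}
Nothing in your proposal plays this role.

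\textbf{Part (2) inherits this gap and adds another.} You try to bound $|G:F|$ through $\prod_p G/C_G(O_p(G))$ with each factor bounded. This cannot work. The image of $O_p(G)$ in $G/C_G(O_p(G))$ is $O_p(G)/Z(O_p(G))$, which is unbounded; take $G$ nilpotent with a large nonabelian Sylow $p$-subgroup. For the same reason your fallback $O_p(G/C_G(O_p(G)))=1$ is false. What must be bounded is $|P:O_p(C)|$, not $|G:C_G(F)|$.

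Your ``roles reversed'' step does correctly show that $P_0=C_P(O_{p'}(F))$ has $(s,\epsilon)$-bounded index in $P$, since $O_{p'}(F)$ has at most $s$ Sylow factors; this is the paper's first step. You then still need the normal-closure argument above to place $P_0$ inside $O_p(C)$ up to bounded index. Only after that does $|P:O_p(C)|\le m$, with $m$ $(s,\epsilon)$-bounded, give $P\le F$ for $p>m$ and hence the bound on $|G:F|$.
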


We provide an example showing that the index $|G:F^*_2(G)|$ (and therefore also the index $|G:F_2(G)|$) in Theorem \ref{genfit} cannot be bounded in terms of $\epsilon$ only
(see Example \ref{exem2}). 
 On the other hand, we establish the following theorem.

\begin{theorem} \label{genfit2}   Let $G$ be a finite group and $T=F_2^*(G)$. Assume that  $\pr^*(T,G)\ge\epsilon>0$. Then the index $|G:F_2(G)|$ is $\epsilon$-bounded. 
\end{theorem}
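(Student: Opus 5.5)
Write $T=F_2^*(G)$ and $F=F(G)$. The hypothesis on Sylow subgroups of $T$ against those of $G$ passes to quotients $G/N$: images of Sylow subgroups are Sylow, and commuting probability does not decrease in quotients. It also passes to the subgroup $F^*(G)\le T$, provided we control Sylow subgroups of $F^*(G)$ as sections of those of $T$; this is slightly delicate, since $\Pr$ need not behave well under passing to subgroups. The plan has two stages. First bound $|G:F_2^*(G)|$ together with the nonabelian composition factors. Then convert this into a bound on $|G:F_2(G)|$.

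\emph{Step 1: reduce to the soluble case.} Consider the layers $E(G)$ and $E(G/F^*(G))$. Suppose $S$ is a nonabelian simple section appearing in $T$. Pick distinct primes $p,q$ dividing $|S|$. Then a Sylow $p$-subgroup $P$ of $T$ and a Sylow $q$-subgroup $Q$ of $G$ satisfy $\Pr(P,Q)\ge\epsilon$. By the standard argument, a large proportion of $P$ commutes with a large proportion of $Q$, so there are $x\in P$ and $y\in Q$ with $|P:C_P(y)|$ and $|Q:C_Q(x)|$ $\epsilon$-bounded. Combining this with the known results on commuting probability of coprime Sylow subgroups in simple groups (as in \cite{dlms23}), we get that the number and orders of the nonabelian simple factors involved in $T$ are $\epsilon$-bounded. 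Since $G/T$ embeds in $\mathrm{Out}$-type data over $F^*(G/F^*(G))$ and $F^*(G)$, this bounds $|G:T|$ modulo a soluble part. After factoring out, we may assume the relevant sections are soluble. Then $F_2^*$ coincides with $F_2$ up to bounded index, because a quasisimple component contributes only boundedly.

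\emph{Step 2: the soluble case.} Here $T$ is essentially $F_2(G)$. We want to show $G/F_2(G)$ has bounded order. Pass to $\bar G=G/\Phi$-type quotients so that $F(\bar G)$ is a product of elementary abelian groups on which $\bar G/F(\bar G)$ acts faithfully. For a $p$-element $g\in G$ outside $F_2(G)$, it acts nontrivially on $F_2(G)/F(G)$. Hence there is a prime $r\ne p$ with a Sylow $r$-subgroup $R$ of $T$ on which $g$ acts nontrivially modulo $F$. The estimate $\Pr(R,Q)\ge\epsilon$ for a Sylow $p$-subgroup $Q$ of $G$ gives that $Q$ has a subgroup of $\epsilon$-bounded index centralizing a large part of $R$ modulo $F$. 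Arguing as in the proof of Theorem~\ref{expG/F(G)}(1) for $F_2$ (the analogue for $\gamma_\infty$), we then bound $|G:F_2(G)|$. The point is that $F_2(G)/F(G)$ now plays the role that $G$ played in \cite{dlms23}. Concretely: bound the exponent of $G/F_2(G)$ first, then the number of primes in $\pi(G/F_2(G))$ via coprime action on $F_2/F$, and finally the orders of its Sylow subgroups.

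\emph{Main obstacle.} The hardest part is the last point: bounding the number of primes dividing $|G/F_2(G)|$ independently of $s=|\pi(F(G))|$. Example~\ref{exem2} shows this fails with $F^*$ in place of $F_2^*$. The extra layer is what saves us, because each prime $p$ of $G/F_2(G)$ must act faithfully on some $r$-section of $F_2/F$. My plan there is to choose, for each such $p$, a witness $r$. Then $\epsilon$-boundedness of the index of centralizers forces $|G:C_G(\text{section})|$ to be bounded. Using Hartley–Isaacs-type bounds on coprime action, only boundedly many primes survive.
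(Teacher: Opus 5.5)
\textbf{There is a genuine gap in Step 2. It comes from misidentifying the obstacle.}

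The number of primes of $G/F_2(G)$ is not the issue. Once the exponent of $G/F_2(G)$ is bounded, which is your first sub-step, every prime of $G/F_2(G)$ is at most that exponent. Example~\ref{exem2} does not show otherwise either: there $\pi(G/F_2(G))=\{2,3,5\}$, and what grows is $s=|\pi(F(G))|$, the number of primes in the layer \emph{below}.

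The real difficulty is your last sub-step, bounding the Sylow subgroups of $G/F_2(G)$. Put $\bar G=G/F(G)$ and let $\bar P$ be a Sylow $p$-subgroup of $\bar G$. For each Sylow $r$-subgroup $R$ of $F(\bar G)=F_2(G)/F(G)$ with $r\neq p$, Lemma~\ref{boundindex} bounds $|\bar P:C_{\bar P}(R)|$. But this bounds $|\bar P:C_{\bar P}(O_{p'}(F(\bar G)))|$ only in terms of $\epsilon$ \emph{and} $|\pi(F_2(G)/F(G))|$.

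Example~\ref{exem1} shows the data inherited by $\bar G$ alone cannot remove this dependence. The group $\bar G=\prod_i D_{2p_i}$ satisfies $\pr^*(F(\bar G),\bar G)\ge 1/2$, yet $|\bar G:F(\bar G)|=2^s$. So choosing one witness prime $r$ for each $p$ cannot work. ``Arguing as in Theorem~\ref{expG/F(G)}(1)'' does not transfer either: there, the smallness of all primes of $G/F(G)$ came from Lemma~\ref{P<F(G)}, which uses $T=\gamma_\infty(G)$ specifically.

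\textbf{The missing idea is to get the bound on primes from the bottom layer.}
\begin{enumerate}
\item Since $F^*(G)\trianglelefteq F_2^*(G)$, Lemma~\ref{star} gives $\pr^*(F^*(G),G)\ge\epsilon$.
\item Theorem~\ref{genfit}(1) then bounds the exponent of $G/F(G)$, and hence bounds $|\pi(G/F(G))|$.
\item Let $M/F(G)=F^*(G/F(G))$. One checks $M\le F_2^*(G)$, so Lemma~\ref{star} gives $\pr^*(M/F(G),G/F(G))\ge\epsilon$.
\item Theorem~\ref{genfit}(2) applied to $G/F(G)$ bounds $|G:F_2(G)|$ in terms of $\epsilon$ and $|\pi(F_2(G)/F(G))|\le|\pi(G/F(G))|$.
\end{enumerate}
This is the paper's proof.

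It also makes your Step 1 unnecessary. As written, Step 1 is not an argument anyway: ``$G/T$ embeds in Out-type data'' bounds nothing. Theorem~\ref{genfit} already handles components by passing to $C_G(E)$.

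Finally, your worry about restricting to $F^*(G)$ is unfounded. Sylow subgroups of a normal subgroup are intersections with Sylow subgroups of the larger group, and $\Pr$ can only increase on subgroups (Lemma~\ref{subgroups}(2)). This is exactly Lemma~\ref{star}(1).
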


Remark that the statements of the above theorems remain true if we drop the hypothesis that $T=\gamma_{\infty}(G)$ (respectively, $T=\gamma_k(G)$, $T=F^*(G)$ or $T=F_2^*(G)$) and instead just assume that $T \ge \gamma_{\infty}(G) $ (respectively, $ T \ge \gamma_k(G) $, $T \ge  F^*(G)$ or $T \ge  F_2^*(G)$). This is because for any subgroups $K\leq H\leq G$ with $K$ normal in $H$ we have $\pr^*(K,G)\ge\pr^*(H,G)$ (see the next section for more details).

\section{Preliminaries}
All groups considered in this paper are finite and this will be tacitly assumed throughout. We slightly abuse the standard terminology and say that a group $G$ has trivial Sylow $p$-subgroup whenever $p$ does not divide $|G|$.

Let a group $A$ act by automorphisms on a group $G$. We say for 
short that the group $A$ acts on a group $G$ coprimely if the orders 
of $A$ and $G$ are coprime, that is, $(|A|,|G|) = 1$. 
The following lemma records some standard results on coprime action. 
In the sequel the lemma will often be used without explicit references.

\begin{lemma}
[{\cite[(24.1),(24.5),(24.6)]{aschbacher2000}}]
\label{lemma:coprime_action}
Let a group $A$ act coprimely on a group $G$ and let $\Phi(G)$ 
be the Frattini subgroup of $G$. Then:
\begin{enumerate}
  \item If $N$ is a normal $A$-invariant subgroup of $G$, then 
        $C_{G/N}(A) = C_G(A)N/N$.
  \item $[G,A,A] = [G,A]$.
  \item If $G$ is abelian, then $G = [G,A] \times C_G(A)$.
  \item If $A$ centralizes $G/\Phi(G)$, then $A$ centralizes $G$.
\end{enumerate}
\end{lemma}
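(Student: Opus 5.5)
The four parts of the lemma are classical, and I would prove them in the order (1), (2), (3), (4), each step using the previous ones. The two basic tools are the Schur--Zassenhaus theorem, including the conjugacy of complements (which in general relies on Feit--Thompson, or one may assume $A$ or $G$ soluble), and the Frattini argument.

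For (1), the inclusion $C_G(A)N/N\le C_{G/N}(A)$ is clear. For the reverse, let $gN$ be fixed by $A$. Then $A$ acts on the coset $gN$, and in the semidirect product $NA$ it normalizes the subgroup $N\langle g\rangle\cap gN$-setup. The cleanest route is to consider $H=N\rtimes A$ and note that $A^{g}\le H$, since $g$ normalizes $NA$ because $[g,A]\subseteq N$. By Schur--Zassenhaus, $A^g=A^{n}$ for some $n\in N$. Hence $gn^{-1}$ normalizes $A$ inside $GA$, so $[gn^{-1},A]\le G\cap A=1$. This gives $gn^{-1}\in C_G(A)$, and therefore $gN\in C_G(A)N/N$.

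For (2), put $K=[G,A]$. This subgroup is normal in $G$ and $A$-invariant, and $A$ acts trivially on $G/K$. Apply (1) with $N=[K,A]$, which is normal in $\langle G,A\rangle$. Then $A$ centralizes $K/[K,A]$, and it centralizes $G/K$. The standard argument is that an automorphism group of coprime order centralizing both $K/N$ and $G/K$ centralizes $G/N$; this follows from (1) via the three subgroups lemma, or by counting orbits. Hence $[G,A]\le [K,A]$, and equality holds.

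For (3), with $G$ abelian, the map $g\mapsto \prod_{a\in A} g^{a}$ is an endomorphism. Composing it with $|A|$-th root extraction (possible since $|A|$ is coprime to $|G|$) gives an idempotent projection onto $C_G(A)$. Its kernel is generated by the elements $g^{-1}g^a$, that is, it equals $[G,A]$, so $G=[G,A]\times C_G(A)$.

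For (4), suppose $A$ centralizes $G/\Phi(G)$. By (1) we get $G=C_G(A)\Phi(G)$, and then $G=C_G(A)$ by the nongenerator property of $\Phi(G)$.

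The main obstacle I expect is the step in (2) showing that $A$ centralizes $G/[K,A]$, which I have only sketched; if it proves awkward, I would instead cite the standard references as the paper does.
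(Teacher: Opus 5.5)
The paper gives no proof of this lemma; it only cites Aschbacher. Your route is essentially the standard textbook one: Schur--Zassenhaus conjugacy for (1), reduction of (2) to (1), an averaging endomorphism for (3), and the nongenerator property for (4). Parts (1), (3) and (4) are fine, with two small repairs. In (1), delete the garbled phrase about the ``$N\langle g\rangle\cap gN$-setup''; the rest of the argument stands. In (3), add the short check that the kernel of your projection is $[G,A]$: if $\prod_a g^a=1$, then $g^{|A|}=\prod_a (g^{-1}g^a)^{-1}\in[G,A]$, and $|A|$ is prime to the order of $g$.

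The real soft spot is the step in (2) that you flagged, together with your unproved claim that $[K,A]$ is normal in $\langle G,A\rangle$. Neither needs the three subgroups lemma or orbit counting. Since $A$ centralizes $G/K$, part (1) with $N=K$ gives $G=C_G(A)K$. For $c\in C_G(A)$, $k\in K$, $a\in A$, the identity $[ck,a]=[c,a]^k[k,a]=[k,a]$ then shows that every generator of $[G,A]$ already lies in $[K,A]$. Hence $[G,A]=[K,A]=[G,A,A]$ directly.

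The same factorization $G=C_G(A)K$ closes both loose ends in your version. First, $[K,A]$ is normalized by $G$, since $C_G(A)$ normalizes $K$ and centralizes $A$, and $K$ normalizes $[K,A]$. Second, it proves your claim that centralizing $K/N$ and $G/K$ forces centralizing $G/N$. In $\bar G=G/N$, (1) gives $\bar G=C_{\bar G}(A)\bar K$, and $A$ centralizes both factors.
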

\medskip

We now collect some results on commuting probability that will be needed. 
\medskip

If $X, Y$ are subsets of a     group $G$, we have
$$\Pr(X,Y) = \frac{|\{(x,y) \in X \times Y \mid xy = yx\}|}{|X||Y|}. $$
\medskip

Note that $\Pr(X,Y) = \Pr(Y,X)$ and
\[
\Pr(X,Y) = \frac{1}{|Y|} \sum_{y \in Y} \frac{|C_X(y)|}{|X|}
         = \frac{1}{|X|} \sum_{x \in X} \frac{|C_Y(x)|}{|Y|},
\]
where, as usual, $C_Y(x)$ denotes the set of all elements of $Y$ 
centralizing $x$.

The next lemmas are given without proofs as they can be found in \cite{dlms23}.

\begin{lemma}[Lemma 2.2, \cite{dlms23}] \label{subgroups}
Let $G$ be a group and let $H, K$ be subgroups of $G$. Then
\begin{enumerate}

    \item If $N$ is a normal subgroup of $G$, then $ \Pr(HN/N, KN/N) \geq \Pr(H,K).$
    
    \item If $H_0 \leq H$, then $\Pr(H_0,K) \geq \Pr(H,K).$
    
    \item If $G = G_1 \times G_2$, $H_i \leq G_i$ and $K_i \leq G_i$, then 
    $$\Pr(H_1 \times H_2, K_1 \times K_2) = \Pr(H_1,K_1)\Pr(H_2,K_2).$$
    
\end{enumerate}
\end{lemma}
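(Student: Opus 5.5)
The plan is to prove the three parts in turn from the counting formula
\[
\Pr(X,Y)=\frac{1}{|X|}\sum_{x\in X}\frac{|C_Y(x)|}{|Y|},
\]
together with its symmetric version. Everything reduces to elementary comparisons of centralizer sizes.

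For part (1), write $\bar G=G/N$, $\bar H=HN/N$ and $\bar K=KN/N$. Then $\bar H\cong H/(H\cap N)$, and the image of a random element of $H$ is uniformly distributed in $\bar H$; the same holds for $K$. If $x$ and $y$ commute, their images commute. Hence the probability that random $x\in H$, $y\in K$ have commuting images is at least $\Pr(H,K)$. That probability is exactly $\Pr(\bar H,\bar K)$, because the pair of images is a uniform random pair in $\bar H\times\bar K$. Said through the formula: the number of commuting pairs in $H\times K$ is at most the number of pairs whose images commute, and this equals $|H\cap N|\,|K\cap N|$ times the number of commuting pairs in $\bar H\times\bar K$. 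Dividing by $|H||K|$ gives the inequality.

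Part (2) is the step needing a small trick, and I expect it to be the main obstacle. Use the expression
\[
\Pr(H,K)=\frac{1}{|K|}\sum_{y\in K}\frac{|C_H(y)|}{|H|}.
\]
So it suffices to show $|C_{H_0}(y)|/|H_0|\ge |C_H(y)|/|H|$ for every $y\in K$. Now $C_{H_0}(y)=C_H(y)\cap H_0$ is a subgroup, and the product formula gives
\[
|C_H(y)H_0|=\frac{|C_H(y)|\,|H_0|}{|C_{H_0}(y)|}\le |H|.
\]
Rearranging yields exactly the needed inequality $|C_{H_0}(y)|/|H_0|\ge |C_H(y)|/|H|$. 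Averaging over $y\in K$ completes the proof.

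For part (3), an element $(h_1,h_2)$ commutes with $(k_1,k_2)$ if and only if $h_1$ commutes with $k_1$ and $h_2$ commutes with $k_2$. So the set of commuting pairs in $(H_1\times H_2)\times(K_1\times K_2)$ is in bijection with the product of the sets of commuting pairs in $H_1\times K_1$ and in $H_2\times K_2$. Since the denominators $|H_1\times H_2|\,|K_1\times K_2|$ factor in the same way, the probabilities multiply.
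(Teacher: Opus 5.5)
Your proof is correct in all three parts. The paper gives no proof of its own for this lemma (it is quoted from \cite{dlms23}), and your arguments are the standard elementary ones: in (1) the quotient map $H\times K\to \bar H\times \bar K$ has fibres of constant size $|H\cap N|\,|K\cap N|$ and sends commuting pairs to commuting pairs; in (2) your product-formula step is just the index bound $|H_0:C_{H_0}(y)|=|H_0:H_0\cap C_H(y)|\le |H:C_H(y)|$ applied for each $y\in K$; and (3) is componentwise commutation in a direct product.
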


\begin{lemma}[Lemma 2.8, \cite{dlms23}]  \label{lemma2.8}
Let $G$ be a group and let $H, K$ be subgroups of $G$ with 
$\Pr(H,K)\geq\epsilon>0$. Then there exists a subset $X$ of $H$ such that 
for $H_0 = \langle X \rangle$ the following holds.
\begin{enumerate}
    \item $|H : H_0| \leq 2/\epsilon - 1$;
    \item $|K : C_K(x)| \leq 2/\epsilon$ for every $x \in X$;
    \item $|K : C_K(x)| \leq (2/\epsilon)^{6/\epsilon}$ for every $x \in H_0$.
\end{enumerate}
\end{lemma}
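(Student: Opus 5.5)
The idea is to take $X$ to be the set of elements of $H$ whose centralizer in $K$ has small index. An averaging argument will show that $X$ is large, which gives (1) and, by construction, (2). For (3) I will show that every element of $H_0=\langle X\rangle$ is a product of boundedly many elements of $X$, and then use that centralizer indices are submultiplicative.

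\emph{Choice of $X$ and items (1), (2).} Put
$$X=\{x\in H : |K:C_K(x)|\le 2/\epsilon\},$$
so (2) holds by definition. Note that $1\in X$ and $X=X^{-1}$, since $C_K(x)=C_K(x^{-1})$. Write $\Pr(H,K)=\frac{1}{|H|}\sum_{x\in H}|K:C_K(x)|^{-1}$. Each term is at most $1$ for $x\in X$ and less than $\epsilon/2$ for $x\notin X$. Setting $\alpha=|X|/|H|$, this gives
$$\epsilon\le \alpha+(1-\alpha)\epsilon/2,$$
hence $\alpha\ge \epsilon/(2-\epsilon)$. Since $X\subseteq H_0$, we get $|H:H_0|\le |H|/|X|\le (2-\epsilon)/\epsilon=2/\epsilon-1$, which is (1).

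\emph{Bounded word length, the main step.} Put $Y_n=X^n$ (products of $n$ elements of $X$). Because $1\in X$, these sets form an increasing chain $Y_1\subseteq Y_2\subseteq\cdots$, and their union is $H_0$. Once $Y_{n+1}=Y_n$, the set $Y_n$ is closed under multiplication by $X$, so $Y_n=H_0$.

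The key claim is that if $Y_{n+1}\neq Y_n$, then $|Y_{n+2}|\ge |Y_{n-1}|+|X|$. To prove it, take $h\in Y_{n+1}\setminus Y_n$. If $hx=y$ with $x\in X$ and $y\in Y_{n-1}$, then $h=yx^{-1}\in Y_n$, using $X^{-1}=X$; this is a contradiction. Hence the translate $hX$ is disjoint from $Y_{n-1}$. Both $hX$ and $Y_{n-1}$ lie in $Y_{n+2}$, and $|hX|=|X|$, which proves the claim.

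Thus every three steps of the chain that have not yet stabilized increase its size by at least $|X|$. Since $|H_0|\le |H|\le (2/\epsilon)|X|$, the chain can keep growing only for about $3\cdot 2/\epsilon$ steps. So $H_0=Y_l$ with $l\le 6/\epsilon$. The exact constant needs careful bookkeeping at the start of the chain, and this counting is the step I expect to be most delicate.

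\emph{Item (3).} For $g=x_1\cdots x_l$ with $x_i\in X$, we have $C_K(g)\supseteq\bigcap_i C_K(x_i)$. The index of an intersection of subgroups is at most the product of their indices, so
$$|K:C_K(g)|\le\prod_{i=1}^{l}|K:C_K(x_i)|\le (2/\epsilon)^{l}\le (2/\epsilon)^{6/\epsilon},$$
which is (3).
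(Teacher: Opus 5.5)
Your proof is correct, and it is the standard Neumann--Eberhard-type argument for this lemma, which the paper quotes from the literature without proof. It uses the same ingredients: the set $X$ of elements with $|K:C_K(x)|\le 2/\epsilon$, a density count for (1), a bounded word-length bound for a large symmetric set containing $1$, and submultiplicativity of centralizer indices for (3). The bookkeeping you flagged closes cleanly. If $N$ is least with $Y_N=H_0$, your claim gives $|Y_{3j+1}|\ge (j+1)|X|$ whenever $3j\le N$, and together with $|H_0|\le (2/\epsilon-1)|X|$ this forces $N<3(2/\epsilon-1)<6/\epsilon$.
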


We observe that if in the above lemma $H$ normalizes $K$, then $H$ contains a normal subgroup $H_0$ with the required properties. 

\begin{lemma}[Lemma 2.10, \cite{dlms23}] 
\label{lm2.10}
Let $G$ be a group, $P$ a $p$-subgroup and $Q$ a $q$-subgroup of $G$ for two different primes $p$ and $q$. Assume that $\Pr(P,Q) \ge \epsilon > 0$.

\begin{enumerate}
    \item If $P$ normalizes $Q$ and $p > (2/\epsilon)^{6/\epsilon}$, then $[P,Q] = 1$.
    \item If $p > (2/\epsilon)^{6/\epsilon}$, then $Q$ has a normal subgroup $Q_0$ such that $  |Q : Q_0| \le \lfloor 2/\epsilon \rfloor!$ and $ [P, Q_0] = 1. $ 
\end{enumerate}
\end{lemma}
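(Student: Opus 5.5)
For part (1) we use Lemma~\ref{lemma2.8} with $H=P$ and $K=Q$. It produces a subset $X\subseteq P$ with $H_0=\langle X\rangle$ and $|P:H_0|\le 2/\epsilon-1$. This index is a power of $p$. It is also smaller than $p$, because $p>(2/\epsilon)^{6/\epsilon}>2/\epsilon$. So $H_0=P$, and part (3) of Lemma~\ref{lemma2.8} gives $|Q:C_Q(x)|\le (2/\epsilon)^{6/\epsilon}<p$ for every $x\in P$.

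Fix $x\in P$ and consider the $Q$-conjugacy class $x^Q$, which has fewer than $p$ elements. Since $P$ normalizes $Q$, for $g\in Q$ we have $(x^g)^x=x^{x^{-1}gx}$ with $x^{-1}gx\in Q$. Hence conjugation by $x$ permutes $x^Q$. The group $\langle x\rangle$ is a $p$-group, so its orbits have $p$-power length. As $|x^Q|<p$, every orbit is trivial, and $x$ commutes with each $x^g$.

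Now $x^{-1}x^g=[x,g]$ lies in $Q$, because $Q$ is normalized by $x$. On the other hand, $x$ and $x^g$ are commuting $p$-elements, so $x^{-1}x^g$ is a $p$-element. An element that is both a $p$-element and a $q$-element is trivial, so $[x,g]=1$. This holds for all $x\in P$ and $g\in Q$, hence $[P,Q]=1$. The one point needing care is that $x^Q$ really is $\langle x\rangle$-invariant, and this is exactly where the normalizing hypothesis is used.

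For part (2) we swap the roles and apply Lemma~\ref{lemma2.8} with $H=Q$ and $K=P$; this requires no normality. We get $Q_1=\langle X\rangle\le Q$ with $|Q:Q_1|\le 2/\epsilon-1$. Part (3) of the lemma gives $|P:C_P(y)|\le(2/\epsilon)^{6/\epsilon}<p$ for every $y\in Q_1$. Since $C_P(y)$ is a subgroup of the $p$-group $P$, this index is a power of $p$ below $p$, so $C_P(y)=P$. Thus $[P,Q_1]=1$.

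Let $Q_0$ be the core of $Q_1$ in $Q$, that is, the kernel of the action of $Q$ on the right cosets of $Q_1$. There are at most $\lfloor 2/\epsilon\rfloor$ such cosets, so $Q/Q_0$ embeds in the symmetric group on that many points. Therefore $|Q:Q_0|\le\lfloor 2/\epsilon\rfloor!$. Finally $Q_0\le Q_1$, so $[P,Q_0]=1$, as required.
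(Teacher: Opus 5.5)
Your proof is correct in both parts. The paper gives no argument for this lemma; it is imported from \cite{dlms23} without proof, so there is no in-paper route to compare against. Your derivation from Lemma~\ref{lemma2.8} is self-contained and valid.

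In part (1), you avoid coprime-action facts by observing that $[x,g]=x^{-1}x^g$ is both a $q$-element (because $x$ normalizes $Q$) and a $p$-element (because $x$ commutes with $x^g$). A common alternative is to let $\langle x\rangle$ permute the fewer than $p$ left cosets of $C_Q(x)$ in $Q$ by conjugation. It must fix each coset, which gives $[Q,x]\le C_Q(x)$, that is, $[Q,x,x]=1$. Then $[Q,x]=1$ follows from Lemma~\ref{lemma:coprime_action}(2). Your version needs slightly less machinery; the coset version is the one that generalizes to acting with all of $P$ on the cosets of $C_Q(P)$.

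Two small remarks:
\begin{itemize}
\item In part (2), item (2) of Lemma~\ref{lemma2.8} already suffices. For $y\in X$ one has $|P:C_P(y)|\le 2/\epsilon<p$, so $X$, and hence $\langle X\rangle$, centralizes $P$. Also, the number of cosets of $\langle X\rangle$ is at most $\lfloor 2/\epsilon\rfloor-1$, so the factorial bound holds with room to spare.
\item You implicitly use $\epsilon\le 1$ (as $\Pr(P,Q)\le 1$) when you assert $(2/\epsilon)^{6/\epsilon}\ge 2/\epsilon$. It is worth stating.
\end{itemize}
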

\medskip

Given subgroups $X,Y\leq G$, recall that $\pr^*(X,Y)$ denotes the maximum real number $\epsilon$ with the property that for every pair of distinct primes $p \in \pi(X)$ and $q \in \pi(Y)$ there is $P \in Syl_p(X)$ and $Q \in Syl_q(Y)$ such that $\pr(P,Q) \ge \epsilon$.

\begin{lemma}[Corollary 4.3, \cite{dlms23}] 
\label{Eb}
Let $G$ be a direct product of nonabelian simple groups such that $\pr^{*}(G, G) \ge \epsilon > 0$. Then $|G|$ is $\epsilon$-bounded.
\end{lemma}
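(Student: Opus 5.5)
The plan is to write $G=S_1\times\cdots\times S_n$ with each $S_i$ nonabelian simple. I will bound separately the order of each factor and the number $n$ of factors. The basic tool is Lemma \ref{subgroups}(3). For distinct primes $p,q$, every Sylow $p$-subgroup of $G$ has the form $P=P_1\times\cdots\times P_n$ with $P_i\in Syl_p(S_i)$, and similarly $Q=Q_1\times\cdots\times Q_n$. Hence
$$\Pr(P,Q)=\prod_i \Pr(P_i,Q_i).$$
Since each factor is at most $1$, the hypothesis $\pr^*(G,G)\ge\epsilon$ passes to every $S_i$, and indeed to every subproduct of the $S_i$. In particular $\pr^*(S_i,S_i)\ge\epsilon$ for every $i$.

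\emph{Step 1: bounding the primes.} Let $p\in\pi(S_i)$ with $p>(2/\epsilon)^{6/\epsilon}$, and take $q=2$. By Lemma \ref{lm2.10}(2) there are $P\in Syl_p(S_i)$ and $Q\in Syl_2(S_i)$ such that $Q$ has a normal subgroup $Q_0$ with $|Q:Q_0|\le\lfloor 2/\epsilon\rfloor!$ and $[P,Q_0]=1$. I then want a contradiction from simplicity. A nontrivial $p$-subgroup centralizing a subgroup of index bounded in the Sylow $2$-subgroup should be impossible once $p$ is large. I would argue this through the classification, using the known structure of centralizers of elements of large prime order (tori in Lie-type groups, cycles in alternating groups). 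This shows that $\pi(S_i)$ consists of $\epsilon$-bounded primes. By the classification together with Zsigmondy's theorem, only finitely many simple groups have all prime divisors below a given bound: groups of Lie type over $\mathbb F_{r^f}$ have primitive prime divisors of $r^{fk}-1$, which grow with $f$ and the rank, and $A_m$ has primes up to $m$. Hence each $|S_i|$ is $\epsilon$-bounded, and the $S_i$ range over a finite list $\mathcal L(\epsilon)$.

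\emph{Step 2: bounding $n$.} For each simple group $S$ in $\mathcal L(\epsilon)$, I fix a pair of distinct primes $p_S,q_S$ such that no Sylow $p_S$-subgroup of $S$ commutes elementwise with any Sylow $q_S$-subgroup. If $P,Q$ do not commute, then $|P:C_P(Q)|\ge 2$, and every $x\notin C_P(Q)$ satisfies $|Q:C_Q(x)|\ge2$. Therefore $\Pr(P,Q)\le \tfrac12+\tfrac12\cdot\tfrac12=\tfrac34$. By pigeonhole, some isomorphism type $S$ occurs at least $n/|\mathcal L(\epsilon)|$ times among the $S_i$. Applying the product formula to the pair $(p_S,q_S)$ and using only these factors gives
$$\epsilon\le (3/4)^{n/|\mathcal L(\epsilon)|}.$$
This bounds $n$, and hence $|G|$.

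\emph{Main obstacle.} The hardest part is producing the pair $(p_S,q_S)$ with no commuting Sylow subgroups, and the centralizer argument in Step 1; both need classification-level information. For the pair, I would first try $p_S=2$ and $q_S$ equal to the largest prime divisor of $|S|$. Suppose some $Q\in Syl_{q}(S)$ centralizes a Sylow $2$-subgroup $P$. Then $Q\le C_S(P)$. By known results on centralizers of Sylow $2$-subgroups in simple groups, and failing that by case inspection of the families, $C_S(P)$ has no elements of the largest prime order. This gives the required non-commuting pair.
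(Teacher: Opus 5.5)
\textbf{Step 1 fails.} It rests on a false claim: the single prime $q=2$ can never bound $p$. A Sylow $2$-subgroup of a simple group may have bounded order, and then $Q_0=1$ satisfies the conclusion of Lemma~\ref{lm2.10}(2) automatically. Concretely, let $S=PSL_2(r)$ with $r$ prime and $r\equiv\pm 3\pmod 8$. A Sylow $2$-subgroup $Q$ is a Klein four-group. A Sylow $r$-subgroup $P$ has order $r$, and $C_S(x)=P$ for every $1\ne x\in P$. Hence
\[
\Pr(P,Q)=\frac{4+(r-1)}{4r}=\frac{r+3}{4r}>\frac14 .
\]
So for $\epsilon\le 1/4$ the pair $(r,2)$ satisfies the hypothesis for arbitrarily large $r$. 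No analysis of centralizers of elements of large prime order can extract a contradiction from that pair alone.

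\textbf{How to repair Step 1.} Use all primes $q\ne p$ simultaneously; this also removes the need for the classification at this stage.
\begin{enumerate}
\item Fix $P\in Syl_p(S)$ with $p>(2/\epsilon)^{6/\epsilon}$, and put $k=\lfloor 2/\epsilon\rfloor!$.
\item For each $q\neq p$, conjugate a good pair so that its $p$-part is $P$.
\item Lemma~\ref{lm2.10}(2) then gives $|S|_q\le k\,|C_S(P)|_q$. When $q>k$ this is an equality, since the index is a power of $q$.
\item Hence $|S:N_S(P)|\le k^k$.
\item $P$ is not normal in the simple group $S$, so $S$ embeds in the symmetric group of degree $k^k$. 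Therefore $|S|\le (k^k)!$, and in particular $p\le (k^k)!$.
\end{enumerate}
Once the primes are bounded, your appeal to the classification and Zsigmondy's theorem correctly yields a finite list $\mathcal L(\epsilon)$.

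\textbf{Step 2 is correct.} The obstacle you identify there is not real: the pair $(p_S,q_S)$ exists for elementary reasons. Take a prime $p$ such that $P\in Syl_p(S)$ is not normal. Suppose that for every $q\ne p$ some Sylow $q$-subgroup commuted with some Sylow $p$-subgroup. After conjugation, $C_S(P)$ would contain a Sylow $q$-subgroup for every $q\neq p$. Then $|S:N_S(P)|$ would be coprime to every $q\ne p$, and it is also coprime to $p$, so $P\trianglelefteq S$, a contradiction. This is just the remark in the introduction that $\Pr^*(G,G)=1$ forces nilpotency.

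So you need neither results on centralizers of Sylow $2$-subgroups nor your specific choice of $p_S=2$ and $q_S$ the largest prime, which you left unjustified. With this observation and the repaired Step~1, your scheme works: bound each factor, then bound the number of factors by pigeonhole and the $3/4$ estimate. Note that the paper only quotes this lemma and gives no proof of its own to compare against.
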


In view of the above results, the following lemma is now straightforward. 

\begin{lemma}
\label{star}
Let $G$ be a group.
\begin{enumerate} 
\item If $K\leq H$ and $L$ are subgroups of $G$, with $K$ normal in $H$, then $\pr^*(K,L)\ge\pr^*(H,L)$;
\item If $N$ is a normal subgroup of $G$, then for any $H\leq G$ we have $\pr^*(HN/N,G/N)\ge\pr^*(H,G)$.
\end{enumerate}
\end{lemma}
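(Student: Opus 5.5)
The statement to prove is Lemma~\ref{star}. Both parts reduce to two facts: how Sylow subgroups behave under passage to normal subgroups and to quotients, and the monotonicity properties of $\Pr$ in Lemma~\ref{subgroups}. I treat the two parts separately.

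For (1), fix distinct primes $p\in\pi(K)$ and $q\in\pi(L)$. Since $\pi(K)\subseteq\pi(H)$, we have $p\in\pi(H)$. By the definition of $\pr^*(H,L)$ there are $P\in Syl_p(H)$ and $Q\in Syl_q(L)$ with $\Pr(P,Q)\ge\pr^*(H,L)$. Because $K$ is normal in $H$, the intersection $P\cap K$ is a Sylow $p$-subgroup of $K$; this is the standard fact that Sylow subgroups intersect normal subgroups in Sylow subgroups. Lemma~\ref{subgroups}(2), applied with $H_0=P\cap K\le P$, gives
\[
\Pr(P\cap K,Q)\ge \Pr(P,Q)\ge \pr^*(H,L).
\]
Since $p,q$ were arbitrary, $\pr^*(K,L)\ge\pr^*(H,L)$. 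Normality is exactly what makes $P\cap K$ Sylow in $K$; without it the argument fails.

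For (2), fix distinct primes $p\in\pi(HN/N)$ and $q\in\pi(G/N)$. Both are then primes of $H$ and of $G$, respectively. Choose $P\in Syl_p(H)$ and $Q\in Syl_q(G)$ with $\Pr(P,Q)\ge\pr^*(H,G)$. Images of Sylow subgroups under a homomorphism are Sylow, so $PN/N$ is a Sylow $p$-subgroup of $HN/N\cong H/(H\cap N)$, and $QN/N$ is a Sylow $q$-subgroup of $G/N$. Lemma~\ref{subgroups}(1) then gives
\[
\Pr(PN/N,QN/N)\ge\Pr(P,Q)\ge\pr^*(H,G),
\]
which is the required inequality.

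There is no real obstacle. The only points needing care are the convention on trivial Sylow subgroups and the fact that primes in $\pi(K)$ or $\pi(HN/N)$ really occur in $\pi(H)$, so that the hypothesis applies. Both are immediate from $|K|$ dividing $|H|$ and $|HN/N|$ dividing $|H|$.
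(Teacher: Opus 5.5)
Your proof is correct and is the argument the paper intends. The paper gives no proof, only the remark that the lemma is straightforward from the preceding results. Your two steps supply exactly that: $P\cap K$ is Sylow in $K$ when $K\trianglelefteq H$, with Lemma~\ref{subgroups}(2) for part~(1), and $PN/N$, $QN/N$ are Sylow in the quotients, with Lemma~\ref{subgroups}(1) for part~(2).
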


The next lemma is Lemma 2.6 from \cite{detomi2021}.

\begin{lemma}
\label{lemma4.4}
Let $G = QH$ be a group with a normal nilpotent subgroup $Q$ 
and a subgroup $H$ such that $(|Q|, |H|) = 1$ and 
$|Q : C_Q(x)| \leq m$ for all $x \in H$. 
Then the order of $[Q,H]$ is $m$-bounded.
\end{lemma}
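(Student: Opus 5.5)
The plan is to reduce first to the case where $Q$ is a $p$-group, and then to an abelian, indeed elementary abelian, $p$-group. After that, a counting argument inside the semidirect product bounds $[Q,H]$.

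\textbf{Reduction to a single prime.} Since $Q$ is nilpotent, $Q=\prod_p Q_p$ with each Sylow subgroup $Q_p$ characteristic, hence $H$-invariant. Then $[Q,H]=\prod_p [Q_p,H]$ and $|Q:C_Q(x)|=\prod_p|Q_p:C_{Q_p}(x)|$. So it suffices to do two things: bound each $|[Q_p,H]|$ in terms of $m$, and bound the number of primes $p$ with $[Q_p,H]\neq 1$. For the second point, a nontrivial coprime action of $x$ on $Q_p$ gives $|Q_p:C_{Q_p}(x)|\ge p$. Hence only primes $p\le m$ can occur, and there are at most $m$ of them.

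\textbf{Reduction to elementary abelian $Q$.} Fix $p$ and write $Q$ for $Q_p$. By Lemma \ref{lemma:coprime_action}(2), $[Q,H]=[Q,H,H]$, so we may replace $Q$ by $[Q,H]$ and assume $Q=[Q,H]$. I then want to bound $|Q|$ through the Frattini quotient $V=Q/\Phi(Q)$. By Lemma \ref{lemma:coprime_action}(1), $|V:C_V(x)|\le|Q:C_Q(x)|\le m$ for all $x\in H$, and $V=[V,H]$. If $|V|$ is $m$-bounded, then $Q$ has boundedly many generators, say $d$. I would then argue that a $d$-generated $p$-group $Q=[Q,H]$ in which every $x\in H$ has centralizer of index at most $m$ has bounded order. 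For this, pick generators of the form $[q_i,x_i]$. The subgroup $\bigcap_i C_Q(x_i)$ has index at most $m^d$, and $Q$ is generated by the $H$-conjugates of boundedly many commutators. This gives a bound on the exponent and derived length of $Q$, and then on $|Q|$. This step is routine in principle, but it has to be written carefully.

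\textbf{The abelian case, which is the main obstacle.} Now let $V$ be an elementary abelian $p$-group with $V=[V,H]$. For $x\in H$, Lemma \ref{lemma:coprime_action}(3) gives $V=[V,x]\times C_V(x)$ with $|[V,x]|\le m$. The goal is to find boundedly many elements $x_1,\dots,x_k\in H$ with $V=[V,x_1]\cdots[V,x_k]$, because then $|V|\le m^k$.

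Choose $x_1,\dots,x_k$ greedily so that $W=[V,x_1]\cdots[V,x_k]$ is as large as possible among products of $k$ such factors, for a value of $k$ still to be bounded. The key trick is as follows. Suppose $y\in H$ with $[V,y]\not\le W$. Then suitable products such as $x_iy$, or $y$ conjugated by elements of $C_H(W)$, yield a single element $z$ with $|[V,z]|>m$, which is a contradiction. To make this work I would pass to $C=C_H(W)$, whose index in $H$ is at most $|W|!$. Inside $C$, the elements act trivially on $W$. So for $y\in C$ and a product $x\,y$, the commutator subspace $[V,xy]$ should essentially contain contributions from both $[V,x]$ and $[V,y]$, and the bound $m$ forces the number of independent directions to be at most $\log_2 m$.

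I expect controlling this interaction, and bounding $k$ by a function of $m$ alone, to be the hardest part. What I would try first is an induction on $m$. Pick $x_1$ with $[V,x_1]\neq 1$ and pass to the action of $C_H([V,x_1])$ on $C_V(x_1)$, hoping to show that the analogous index bound drops there to at most $m/p$. This would terminate after at most $\log_2 m$ steps.
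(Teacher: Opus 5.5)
The paper gives no proof of this lemma; it is quoted as Lemma 2.6 of \cite{detomi2021}. So I judge your proposal on its own, and it has a genuine gap. Your reduction to a single prime is correct, and so is the reduction to $Q=[Q,H]$ and to the Frattini quotient $V$. But the elementary abelian case, which you rightly call the main obstacle, is not proved, and it is the entire content of the lemma. Your key claim is only asserted: that if $[V,y]\not\le W$, then some product $x_iy$ or some conjugate of $y$ gives $z$ with $|[V,z]|>m$. Coprimeness must enter here in an essential way, and your sketch never shows how. Take the group $H$ of all transvections of $V$ with a fixed axis $A$, which is a $p$-group. Every $z\ne 1$ has $|V:C_V(z)|=p$, and products of such elements are again transvections with axis $A$. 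Yet $[V,H]=A$ is arbitrarily large. Your bound $|H:C_H(W)|\le |W|!$ is also unjustified. It presupposes that $H$ acts on $W$, and a product $[V,x_1]\cdots[V,x_k]$ need not be $H$-invariant.

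The induction on $m$ does not close either. Coprimeness does give its first step. If $y$ centralizes $[V,x_1]$, then $x_1^{-1}x_1^{y}$ acts trivially on $[V,x_1]$ and on $V/[V,x_1]$. So it is a unipotent $p'$-element and acts trivially on $V$. Hence $y$ commutes with $x_1$ on $V$, and $[V,x_1y]=[V,x_1]\oplus[C_V(x_1),y]$ gives the bound $m/p$ on $C_V(x_1)$. But the inductive hypothesis then only bounds $[C_V(x_1),C_H([V,x_1])]$. You have no control over $|H:C_H([V,x_1])|$, so nothing is learned about $[V,y]$ for the other $y$. In fact, if $|[V,x_1]|$ is chosen maximal, the same identity shows that $C_H([V,x_1])$ acts trivially on $V$, so the induction says nothing at all.

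What is missing is a global argument; a character average works. Put $W=[V,H]$, so $C_W(H)=0$, and let $n=\dim W$ and $r=\log_p m$. Since $p\nmid |H|$, the Brauer character $\beta$ of $W$ is an ordinary character. Each $x\in H$ acts semisimply, so $\beta(x)$ is a sum of $n$ roots of unity, exactly $\dim C_W(x)\ge n-r$ of which equal $1$. Hence $\mathrm{Re}\,\beta(x)\ge n-2r$ for all $x\in H$. On the other hand, $\sum_{x\in H}\beta(x)=|H|\dim C_W(H)=0$. This forces $n\le 2r$, that is, $|[V,H]|\le m^2$.

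Your Step 2 can then be finished cleanly. Take generators $g_i=[q_i,x_i]$ of $Q$ with $i\le d$, and set $D=\bigcap_i C_Q(x_i)$, so $|Q:D|\le m^d$. For $g\in D$ we have $g_i^{g}=[q_ig,x_i]$, which lies in $\{[q,x_i]: q\in Q\}$, a set of at most $m$ elements. Hence $|Q:Z(Q)|\le m^{d(d+1)}$, and Schur's theorem bounds $|Q'|$. Finally, $Q/Q'$ is $d$-generated and generated by the subgroups $[Q/Q',x]$, each of order at most $m$, so it has exponent at most $m$ and bounded order.
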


We will require the following lemma.

\begin{lemma} \label{boundindex}
    Let $G = QH$ be a group with a normal nilpotent subgroup $Q$ 
and a subgroup $H$ such that $(|Q|, |H|) = 1$ and let $T = \gamma_k(G)$ for some $k \ge 1$. Assume that $\pr(Q \cap T,H) \ge \epsilon$. Then $|H: C_H(Q)|$ is $\epsilon$-bounded. 
\end{lemma}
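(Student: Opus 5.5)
The plan is to reduce to the subgroup $[Q,H]$, which lies inside $T$, and then combine Lemma \ref{lemma2.8} with Lemma \ref{lemma4.4}.

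First I will show that $[Q,H]\le Q\cap T$. Since $Q$ is normal in $G$ and $H$ acts coprimely on $Q$, Lemma \ref{lemma:coprime_action}(2) gives $[Q,H]=[Q,H,H]$. Iterating,
$$[Q,H]=[Q,H,\dots,H]\qquad (k-1 \text{ copies of } H).$$
The right-hand side lies in $\gamma_k(G)=T$, and it lies in $Q$ because $Q$ is normal. So $[Q,H]\le Q\cap T$, and Lemma \ref{subgroups}(2) yields $\pr([Q,H],H)\ge\epsilon$.

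Next I will apply Lemma \ref{lemma2.8} with $H$ in the first slot and $[Q,H]$ in the second. Since $H$ normalizes $[Q,H]$, the remark after that lemma lets me take a normal subgroup $H_0$ of $H$ with the following properties:
\begin{enumerate}
\item $|H:H_0|\le 2/\epsilon-1$;
\item $|[Q,H]:C_{[Q,H]}(x)|\le (2/\epsilon)^{6/\epsilon}$ for every $x\in H_0$.
\end{enumerate}
Applying Lemma \ref{lemma4.4} to the group $[Q,H]H_0$ (here $[Q,H]$ is nilpotent and normal, and coprime to $H_0$) shows that $D=[[Q,H],H_0]$ has $\epsilon$-bounded order.

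It remains to bound $|H_0:C_{H_0}(Q)|$. I will first check that $C_H([Q,H])=C_H(Q)$. By coprime action $Q=[Q,H]C_Q(H)$, and every element of $H$ centralizes $C_Q(H)$, so centralizing $[Q,H]$ is the same as centralizing $Q$.

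Now take $x\in H_0$ that centralizes $D$. Then $[[Q,H],x]\le D$. By coprimality, $[[Q,H],x]=[[Q,H],x,x]$, and this lies in $[D,x]=1$. Hence $x$ centralizes $[Q,H]$, and therefore $x$ centralizes $Q$. It follows that $H_0/C_{H_0}(Q)$ embeds in $\mathrm{Aut}(D)$, whose order is $\epsilon$-bounded.

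Combining the two steps,
$$|H:C_H(Q)|\le |H:H_0|\,|H_0:C_{H_0}(Q)|,$$
and both factors are $\epsilon$-bounded. The only delicate point is the first step, the inclusion $[Q,H]\le T$, which is what transfers the hypothesis from $Q\cap T$ to a subgroup that $H$ acts on in a controlled way. The rest is routine coprime-action bookkeeping.
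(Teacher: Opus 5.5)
Your proof is correct and is essentially the paper's argument. You apply Lemma~\ref{lemma2.8} and then Lemma~\ref{lemma4.4} to bound a commutator subgroup, and your $D=[[Q,H],H_0]$ in fact coincides with the paper's $[Q,H_0]=[Q,H_0,H_0]$; coprime action then shows that $C_{H_0}(D)$ centralizes $Q$. The only real difference is that you move into $T$ before invoking the lemmas, via $[Q,H]\le Q\cap T$, whereas the paper applies them to $N=Q\cap T$ and afterwards uses $[Q,H_0,\dots,H_0]\le[N,H_0]$. One small slip: for $k=1$ your identity with $k-1=0$ copies of $H$ would read $[Q,H]=Q$, which is false in general, so in that case use the trivial inclusion $[Q,H]\le Q=Q\cap T$ instead.
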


\begin{proof}   
Set $N = Q \cap T $. Since $\pr(N, H) \geq \epsilon$, it follows from Lemma \ref{lemma2.8}  that there is a normal subgroup $H_0$ of $H$ such that $|H : H_0| \leq 2/\epsilon$ and $|N : C_{N}(x)| \leq (2/\epsilon)^{6/\epsilon}$ for every $x \in H_0$. Applying Lemma \ref{lemma4.4} to $NH_0$ we obtain that $[H_0, N]$ has $\epsilon$-bounded order. Moreover, we know that $[ Q, H_0, \overset{k}{\dots} , H_0] \le [ N, H_0]$.
Since $H_0$ acts coprimely on $Q$, it follows that $[Q, H_0, \overset{k}{\dots} , H_0] = [Q, H_0]$ has $\epsilon$-bounded order. 
Thus $C = C_{H_0}([Q, H_0])$ has $\epsilon$-bounded index in $H_0$. Furthermore,
\[
[Q, C, C] \leq [Q, H_0, C] = 1.
\]
As $C$ acs coprimely on $Q$, it follows that $[Q,C] = [Q,C,C]=1$, and therefore 
 $C$ centralizes $Q$. Thus, $|H_0 : C_{H_0}(Q)|$ is $\epsilon$-bounded, and therefore $|H : C_H(Q)|$ is $\epsilon$-bounded as well.
\end{proof}

\section{Theorems 1.1 and 1.2} 

In this section $T$ stands for $\gamma_{\infty}(G)$. 

\begin{lemma}
\label{P<F(G)}
Let $G$ be a soluble group such that $\pr^*(T,G)\geq \epsilon > 0$. If $p > (2/\epsilon)^{6/\epsilon}$, then every Sylow $p$-subgroup of $G$ is contained in $F(G)$.
\end{lemma}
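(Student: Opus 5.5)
We argue by induction on $|G|$, taking $G$ to be a counterexample of minimal order. Fix $p>(2/\epsilon)^{6/\epsilon}$ and $P\in Syl_p(G)$ with $P\not\le F(G)$. The first step is to check that the hypothesis passes to quotients. For $N\trianglelefteq G$ we have $\gamma_\infty(G/N)=TN/N$. By Lemma~\ref{star}(2), $\pr^*(TN/N,G/N)\ge\pr^*(T,G)\ge\epsilon$. Hence, by minimality, $PN/N\le F(G/N)$ for every nontrivial normal subgroup $N$ of $G$.

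Next we reduce to a very restricted structure.
\begin{itemize}
\item If $G$ has two distinct minimal normal subgroups $N_1,N_2$, then $G$ embeds in $G/N_1\times G/N_2$. The image of $P$ lies in $F(G/N_1)\times F(G/N_2)$, and this forces $P\le F(G)$ (the full preimage of $F(G/N_1)$, intersected with that of $F(G/N_2)$, is a nilpotent normal subgroup). So $G$ has a unique minimal normal subgroup $N$.
\item If $\Phi(G)\ne1$, then $P\Phi(G)/\Phi(G)\le F(G/\Phi(G))=F(G)/\Phi(G)$, so $P\le F(G)$. Hence $\Phi(G)=1$.
\end{itemize}
Since $\Phi(G)=1$ and $G$ is soluble, $F(G)$ is a direct product of minimal normal subgroups, so $F(G)=N$ is an elementary abelian $q$-group. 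Moreover $C_G(N)=N$.

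We now show $q\ne p$. If $q=p$, then $PN/N=P/N$ is a normal $p$-subgroup of $G/N$ (it is the Sylow $p$-subgroup of the nilpotent group $F(G/N)$). But $O_p(G/F(G))=1$ when $F(G)$ is a $p$-group, so $P=N\le F(G)$, a contradiction. Thus $q\ne p$.

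Now we use the hypothesis on $T$.
\begin{itemize}
\item Since $G$ is not nilpotent, $T\ne1$. Because $N$ is the unique minimal normal subgroup, $N\le T$, so $q\in\pi(T)$.
\item The primes $q\in\pi(T)$ and $p\in\pi(G)$ are distinct. By the hypothesis $\pr^*(T,G)\ge\epsilon$, there are $Q\in Syl_q(T)$ and $P'\in Syl_p(G)$ with $\pr(Q,P')\ge\epsilon$. After conjugating we may take $P'=P$, since $N$ is normal and $Q$ is replaced by a conjugate Sylow subgroup of $T$.
\item As $N$ is a normal $q$-subgroup of $T$, it lies in $Q$. Lemma~\ref{subgroups}(2) then gives $\pr(N,P)\ge\pr(Q,P)\ge\epsilon$.
\item $P$ normalizes $N$ and $p>(2/\epsilon)^{6/\epsilon}$, so Lemma~\ref{lm2.10}(1) yields $[P,N]=1$.
\end{itemize}
Therefore $P\le C_G(N)=N=F(G)$, which contradicts the choice of $G$.

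The only delicate point is the reduction showing that $F(G)$ is a single minimal normal subgroup of order coprime to $p$ and contained in $T$. Once that is in place, Lemma~\ref{lm2.10}(1) finishes the argument immediately.
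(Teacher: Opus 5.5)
Your proof is correct, but it takes a different route from the paper's.

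\textbf{The paper's route.} It argues directly. It passes to $G/O_p(G)$, so that $F(G)=S_1\times\cdots\times S_l$ is a $p'$-group. Each $S_i\cap T$ is normal in $G$, so it lies in the relevant Sylow subgroup of $T$, and Lemma~\ref{lm2.10}(1) gives $[S_i\cap T,P]=1$. It then lifts this to all of $S_i$: writing $T=\gamma_c(G)$, one has $[S_i,P,\overset{c-1}{\dots},P]\le S_i\cap T$, and coprime action gives $[S_i,P,\overset{c}{\dots},P]=[S_i,P]$. Hence $P$ centralizes $F(G)$.

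\textbf{Your route.} You run a minimal-counterexample reduction: a unique minimal normal subgroup $N$, $\Phi(G)=1$, hence $F(G)=N=C_G(N)$ with $N$ a $q$-group and $q\ne p$. After that, $N\le T$ is automatic because $T$ is a nontrivial normal subgroup. So Lemma~\ref{lm2.10}(1) applies to the whole of $F(G)$ at once, and no coprime-action or commutator-calculus step is needed. The price is a handful of standard structural facts:
\begin{itemize}
\item quotient-closure of the hypothesis via Lemma~\ref{star}(2);
\item Gasch\"utz's $F(G/\Phi(G))=F(G)/\Phi(G)$;
\item the description of $F(G)$ in a Frattini-free group as a product of minimal normal subgroups.
\end{itemize}

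\textbf{What each buys.} Your argument is cleaner for this qualitative statement. The paper's direct argument is exactly the qualitative version of the mechanism in Lemma~\ref{boundindex}, namely pushing information from $S_i\cap T$ up to $S_i$ through $[S_i,H_0,\dots,H_0]\le S_i\cap T$. That mechanism is what the quantitative results (Proposition~\ref{F2sol}, Theorem~\ref{expG/F(G)}(2)) rely on. A minimal-counterexample argument does not transfer as readily to statements about bounded indices or exponents.
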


\begin{proof}
    Let $p$ be a prime such that $p > (2/\epsilon)^{6/\epsilon}$, and let $P$ be a Sylow $p$-subgroup of $G$. We need to show that $P \leq F(G)$. Suppose that this is false. Without loss of generality, we may assume that $O_p(G) = 1$, so that $F(G)$ is a $p'$-subgroup. 
Write 
$$F(G) = S_1 \times \cdots \times S_l,$$ where $S_i$ is a Sylow $p_i$-subgroup of $F(G)$. 
Note that $S_i \cap T$ is normal in $G$ and so it is contained in a Sylow $p_i$-subgroup $Q_i$ of $T$ having the property that $ \pr(P, Q_i)\geq\epsilon$. 
It follows from Lemma \ref{subgroups} (2) that $ \pr(P,S_i \cap T)\geq\epsilon$. 
In view of Lemma \ref{lm2.10} (1) we deduce that $[S_i \cap T, P] = 1$. 
Let $c$ be the positive integer such that $\gamma_{\infty}(G) = \gamma_c(G)$. Since $[S_i, P, \overset{c-1}{\dots} , P] \le  S_i \cap T$ this implies
 $$[S_i, P, \overset{c}{\dots} , P] \le [S_i \cap T, P] = 1.$$
As $P$ acs coprimely on $S_i$, it follows that $[S_i,P, \overset{c}{\dots}, P] = [S_i,P]$, and therefore $[S_i,P] = 1$. 
Consequently, $P$ centralizes $F(G)$. A well-known property of finite soluble groups is that the centralizer of $F(G)$ is contained in $F(G)$. Thus, we have a contradiction, which proves the lemma. 
\end{proof}

\begin{proposition}
\label{F2sol}
Let $G$ be a soluble group such that $\pr^*(T,G)\geq \epsilon > 0$. Then $F_2(G)$ has $\epsilon$-bounded index in $G$.
\end{proposition}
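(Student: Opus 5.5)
Since $T=\gamma_\infty(G)$ is characteristic, Lemma \ref{star}(2) lets us replace $G$ by $G/\Phi(F(G))$ or by $G/F(G)$ when convenient. The goal is to show that $G/F_2(G)$ has $\epsilon$-bounded order.

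The plan follows the argument of \cite{dlms23} for $\pr^*(G,G)$, but in each commutator estimate the group $T$ is used in place of $G$. That substitution is handled exactly as in Lemmas \ref{boundindex} and \ref{P<F(G)}: since $\gamma_\infty(G)=\gamma_c(G)$, iterated commutators $[S,H,\dots,H]$ fall into $S\cap T$, and coprime action then gives $[S,H]=[S,H,\dots,H]$.

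\textbf{Step 1: localize to small primes.} Put $m=(2/\epsilon)^{6/\epsilon}$. By Lemma \ref{P<F(G)}, every Sylow $p$-subgroup with $p>m$ lies in $F(G)\le F_2(G)$. Hence $\pi(G/F_2(G))$ consists of primes at most $m$, and there are $\epsilon$-boundedly many of them. It therefore suffices to bound $|PF_2(G)/F_2(G)|$ for each single prime $p\le m$ and each $P\in Syl_p(G)$.

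\textbf{Step 2: bound $P$ modulo the centralizer of each $q$-part of $F(G)$.} Write $F(G)=S_1\times\cdots\times S_l$ with $S_i$ the Sylow $p_i$-subgroup of $F(G)$. Fix $p\ne p_i$.
\begin{itemize}
\item Since $S_i\cap T$ is normal in $G$, it lies in every Sylow $p_i$-subgroup of $T$.
\item So the hypothesis, together with Lemma \ref{subgroups}(2), gives some $P\in Syl_p(G)$ with $\pr(S_i\cap T,P)\ge\epsilon$.
\item Apply Lemma \ref{boundindex} with $Q=S_i$, $H=P$ and the group $S_iP$, using $\gamma_c(G)\cap S_i=T\cap S_i$. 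For this we need $T\cap S_i\le\gamma_c(S_iP)$ up to the nilpotency argument inside the proof of Lemma \ref{boundindex}; concretely, $[S_i,P,\dots,P]\le S_i\cap T$ is all that proof uses.
\item This yields that $|P:C_P(S_i)|$ is $\epsilon$-bounded.
\end{itemize}
Let $K_p=O_{p'}(F(G))$. Then $P/C_P(K_p)$ embeds in $\prod_i P/C_P(S_i)$, so its order is bounded by a function of $\epsilon$ and $l$. Bounding in terms of $l$ is not enough, so we refine.

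\textbf{Step 3: pass to $F_2$ via the Fitting quotient (main obstacle).} Each $P$ acts on $F(G)/O_p(F(G))$ so that every $S_i$ is centralized by a subgroup of bounded index. Two further observations are needed.
\begin{itemize}
\item Modulo $O_{p',p}(G)$, the image of $P$ acts faithfully on $O_{p'}(G/O_p(G))$.
\item $C_G(F(G))\le F(G)$.
\end{itemize}
The image of $P$ in $G/F_2(G)$ is a $p$-group centralizing $F(G/F(G))$ modulo $F(G/F(G))$. The difficulty is to bound it uniformly in $l$.

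First attempt: use Lemma \ref{lemma2.8} to get a normal subgroup $P_0\le P$ of index at most $2/\epsilon$ in which every element $x$ has $|S_i:C_{S_i}(x)|$ bounded. By Lemma \ref{lemma4.4} each $[S_i,P_0]$ then has bounded order, so its automorphism group is bounded, and $P_0$ acts on each $[S_i,P_0]$ through a bounded group. Next, pass to $\bar G=G/F(G)$ and apply the same reasoning to $F(\bar G)$ with Sylow subgroups of $\bar T$, using Lemma \ref{star}(2). This should show that $P_0$, which has bounded exponent image, normalizes and centralizes almost all of $F(\bar G)$. Then $P_0F(\bar G)/F(\bar G)$ is bounded, since otherwise it would centralize $F(\bar G)$ and so lie in $F(\bar G)$.

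The delicate point is that the exceptional index must be controlled independently of $l$. I would try to get this by noting that the subgroups $C_{P_0}(S_i)$ take boundedly many values, so their intersection has bounded index. I expect this uniformity to be where the real work lies.

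Finally, combining the bounded $p$-parts over the boundedly many primes $p\le m$ gives that $|G:F_2(G)|$ is $\epsilon$-bounded.
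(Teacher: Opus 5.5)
Your Steps 1 and 2 are fine, but Step 3 is not an argument. It ends with ``this should show'' and ``I expect this uniformity to be where the real work lies''. Moreover, the one concrete device you offer for uniformity in $l$ is false: the subgroups $C_{P_0}(S_i)$ need not take boundedly many values. In Example~\ref{exem1}, where $\pr^*(T,G)\ge 1/2$, the Sylow $2$-subgroup $P=\langle t_1,\dots,t_s\rangle$ is elementary abelian of order $2^s$. Also $F(G)=S_1\times\cdots\times S_s$ with $|S_i|=p_i$, and the centralizers $C_P(S_i)=\langle t_j : j\ne i\rangle$ are $s$ distinct hyperplanes. Intersecting with a subgroup $P_0$ of bounded index still leaves unboundedly many distinct values. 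More to the point, in that example $|P:C_P(F(G))|=|G:F(G)|=2^s$. So the action of $P$ on $F(G)$ genuinely cannot be controlled independently of $l$, and trying to make Step 2 uniform inside $G$ is the wrong target.

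The missing idea is to run Step 2 one Fitting layer up, where $l$ is automatically bounded. Lemma~\ref{P<F(G)} gives more than your Step 1 uses: every prime dividing $|G:F(G)|$, not just $|G:F_2(G)|$, is at most $m$. Set $\bar G=G/F(G)$. Then $|\pi(\bar G)|$ is $\epsilon$-bounded, and $\bar G$ inherits the hypothesis by Lemma~\ref{star}(2), since $\gamma_\infty(\bar G)=TF(G)/F(G)$. Fix $p$ and pass further to $\bar G/O_p(\bar G)$. Its Fitting subgroup is a $p'$-group with $\epsilon$-boundedly many Sylow subgroups, so your Step 2 bound on $|P:C_P(F)|$ is already an $\epsilon$-bound. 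Also $C_P(F)=1$, because $C(F)\le F$ in a soluble group. Hence $|\bar P:O_p(\bar G)|$ is $\epsilon$-bounded for every Sylow $p$-subgroup $\bar P$ of $\bar G$. Since only boundedly many primes occur, $|\bar G:F(\bar G)|=|G:F_2(G)|$ is $\epsilon$-bounded. This is exactly the paper's proof, and none of the machinery in your Step 3 is needed.
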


\begin{proof}
  By Lemma \ref{P<F(G)}, if $P$ is a Sylow $p$-subgroup of $G$ for a prime $p > (2/\epsilon)^{6/\epsilon}$, then $P$ is contained in $F(G)$. Passing to the quotient over $F(G)$, we assume that the prime divisors of $|G|$ are smaller than $(2/\epsilon)^{6/\epsilon}$. In particular, the cardinality of $\pi(G)$ is $\epsilon$-bounded. We will show that under this assumption $F(G)$ has $\epsilon$-bounded index in $G$. This will imply the desired result.

Let $p$ be a prime divisor of $|G|$ and let $P$ be a Sylow $p$-subgroup of $G$. 
First assume that $O_p(G) = 1$, in which case $F(G)$ is a $p'$-subgroup. Write
$$
F(G) = S_1 \times \cdots \times S_l,
$$
where $S_i$ is a Sylow $p_i$-subgroup of $F(G)$ and $l$ is $\epsilon$-bounded. Set $S_i \cap T = N_i$. For each $p_i$, there exists a Sylow $p_i$-subgroup $Q_i$ of $T$ such that $\pr(P, Q_i) \geq \epsilon$. As $S_i$ is normal in $G$, it follows that $N_i \leq Q_i$, therefore $\pr(P, N_i) \geq \epsilon$ by Lemma \ref{subgroups} (2). Thus,
it follows from Lemma \ref{boundindex} applied to $S_iP$ that $|P : C_P(S_i)|$ is $\epsilon$-bounded.
Since $l$ is $\epsilon$-bounded, we deduce that $|P : C_P(F(G))|$ is $\epsilon$-bounded. As $G$ is soluble, we have that $C_G(F(G)) \leq F(G)$. On the other hand $F(G)$ is a $p'$-group, so $C_P(F(G)) = 1$. Thus, $P$ has $\epsilon$-bounded order.

Now we drop the assumption that $O_p(G) = 1$ and apply the previous argument to $G/O_p(G)$. We deduce that the order of $P/O_p(G)$ is $\epsilon$-bounded. Since there are only $\epsilon$-boundedly many primes dividing $|G|$, it follows that the order of $G/F(G)$ is $\epsilon$-bounded. This concludes the proof. 
\end{proof}

We will now prove Theorem \ref{expG/F(G)}.

\begin{proof}[Proof of Theorem \ref{expG/F(G)}]

Part (1). Let $\epsilon>0$ and $G$ be a group such that $\pr^*(T,G)\geq\epsilon$. We need to show that the index of $F_2(G)$ in $G$ is $\epsilon$-bounded.

Let $R = R(G)$ be the soluble radical of $G$.  By Proposition \ref{F2sol}, the index of $F_2(R)$ in $R$ is $\epsilon$-bounded. As $F_2(R) \leq F_2(G)$, it is sufficient to prove that the order of $G/R$ is $\epsilon$-bounded. By Lemma \ref{star} (2) we have that 
$ \pr^*(TR/R , G/R) \geq \epsilon$, and since $\gamma_{\infty}(G/R) = TR/R $, without loss of generality we may assume that $R=1$. Hence, our aim is to show that the order of $G$ is $\epsilon$-bounded.

 Let $F^* = F^*(G)$ be  the generalized Fitting subgroup. Observe that under our hypotheses $F^*$ coincides with the socle of $G$ and so $F^*\le T$. Thus, by Lemma \ref{star}, 
 $$\pr^*(F^*,F^*) \ge \pr^*(T,G) \ge \epsilon.$$ 
 So we can apply Lemma \ref{Eb} to $F^*$ and see that the order of $F^*$ is $\epsilon$-bounded. Consequently, the index of $C_G(F^*)$ in $G$ is $\epsilon$-bounded as well, and since $C_G(F^*) \leq F^*$, we conclude that the order of $G$ is $\epsilon$-bounded. 

\vspace{0.5cm}
Part (2). Recall that $\Pr^*(T,G) \ge \epsilon > 0$. We need to show that the exponent of $G/F(G)$ is $\epsilon$-bounded. 
    
According to part (1) of the theorem, the subgroup $F_2(G)$ has $\epsilon$-bounded index in $G$; in particular $G/F_2(G)$ has $\epsilon$-bounded exponent. So we just need to prove that $F_2(G)/F(G)$ has $\epsilon$-bounded exponent. Thus, without loss of generality we may assume that $G=F_2(G)$.

Choose a prime $p\in\pi(G)$ and let $P$ be a Sylow $p$-subgroup of $G$. If for every $p \in \pi(G)$ the exponent of $P/O_p(G)$ is $\epsilon$-bounded, say at most $n$, then we must have $P = O_p(G)$ for every $p > n$. Since $n$ is $\epsilon$-bounded and does not depend on $p$, it is sufficient to show that $P/O_p(G)$ has $\epsilon$-bounded exponent. Without loss of generality we may assume that $O_p(G) = 1$. Write 
 $$F(G)=S_1\times \dots \times S_l,$$
where each $S_i$ is the Sylow $p_i$-subgroup of $F(G)$. 

 Set $N_i=S_i \cap T$ for $i = 1, \dots, l $. Note that $N_i$ is normal in $G$ and it is contained in the Sylow $p_i$-subgroup $Q_i$ of $T$ having the property that $\pr(P,Q_i) \ge \epsilon $. It follows from Lemma \ref{subgroups} (2) that $\pr(P, N_i) \ge \epsilon$. In view of Lemma \ref{boundindex} applied to $S_iP$ we deduce that $|P : C_P(S_i)|$ is $\epsilon$-bounded.

Set $p^{m_i}=|P : C_P(S_i)|$ and let $m= max\{m_i, 1 \le i \le l \}$. If $x \in P$, then $x^{p^{m}} \in C_G(F(G))$. 
This implies that $P^{p^m}$ centralizes $F(G)$. Since $G$ is soluble, it follows that $P^{p^m} \le F(G)$. On the other hand, $O_p(G) =1$ and we conclude that $x^{p^{m}} = 1$. Thus, $P$ has $\epsilon$-bounded exponent. The proof is now complete.
\end{proof}

Now we will prove Theorem \ref{Hall1}.

\begin{proof}[Proof of Theorem \ref{Hall1}]
Recall that $G$ is a soluble group and $\epsilon > 0$ is a real number such that for any prime $p\in\pi(G)$ there is a Sylow $p$-subgroup $P$ 
of $G$ and a Hall $p'$-subgroup $H$ of $T$ such that $\pr(H,P)\geq \epsilon$. We need to show that the index of $F(G)$ in $G$ is $\epsilon$-bounded.

We can assume that $G$ is not of prime power order.  Let $P$ be a Sylow $p$-subgroup of $G$. We wish to show that $P \cap F(G)$ has $\epsilon$-bounded index in $P$. Without loss of generality we may assume that $O_p(G) = 1$, in which case $F(G)$ is a $p'$-subgroup contained in any Hall $p'$-subgroup of $G$. 

Note that $ T \cap F(G)$ is normal in $T$ and so it is contained in a Hall $p'$-subgroup $H$ of $T$ having the property that $\pr(H, P) \ge \epsilon$. It follows from Lemma \ref{subgroups} (2) that $\pr(T \cap F(G) , P) \ge \epsilon $. In view of Lemma \ref{boundindex} we deduce that $|P : C_P(F(G))|$ is $\epsilon$-bounded.
As $C_G(F(G)) \leq F(G)$ and $F(G)$ is a $p'$-subgroup, it follows that $C_P(F(G)) = 1$ and so $P$ has $\epsilon$-bounded order. 

Thus, for any $p \in \pi(G)$ and any Sylow $p$-subgroup $P$ of $G$ the index of $P \cap F(G)$ in $P$ is $\epsilon$-bounded, say at most $m$. In particular, $P \leq F(G)$ for all primes $p > m$. Since $m$ does not depend on $p$ and is $\epsilon$-bounded, it follows that the order of $G/F(G)$ is $\epsilon$-bounded, as required.
\end{proof}

We now furnish an example showing that, in a soluble group $G$ such that for any prime $p\in\pi(T)$ there is a Sylow $p$-subgroup $P$ of $T$ and a Hall $p'$-subgroup $H$ of $G$ for which $\pr(P,H)\geq\epsilon$, the index of $F(G)$  in $G$ can be arbitrarily large.

\begin{example}\label{exem1}{\rm
Let $p_1, \ldots, p_s$ be distinct odd primes and $D_i$ the dihedral group of order $2p_i$.
Let $G$ be the direct product of the groups $D_i$.
It is clear that $T = \gamma_2(G)$ is cyclic of order $p_1\cdots p_s$, and whenever $P$ is a Sylow $p$-subgroup of $T$ we have 
$$\Pr(P,G) = \frac{1}{|P|} \sum_{y \in P} \frac{|C_G(y)|}{|G|} =  \frac{1}{|P|} \sum_{y \in P} \frac{1}{|y^G|}.$$

Since $|y^G| \le 2 $ for every $y \in P$, it follows that $\pr(P,G) \geq 1/2$. Because of Lemma \ref{subgroups} (2), every Hall $p'$-subgroup $H$ of $G$ satisfies $\pr(P,H) \ge 1/2$. On the other hand, $F(G)=T$ and so $|G:F(G)| = 2^{s}$, which can be arbitrarily large.} $\qed$
\end{example}

Note that in the above example the exponent of $G/F(G)$ is small. 
 This is related to the following lemma.

\begin{lemma} \label{Hall2-switch} Let $G$ be a soluble group and assume that for every $p\in \pi(T)$ there is a Sylow $p$-subgroup $P$ of $T$ and a Hall $p'$-subgroup $H$ of $G$ such that $\pr(P, H) \geq \epsilon > 0$. Then the exponent of $G/F(G)$ is $\epsilon$-bounded.  
\end{lemma}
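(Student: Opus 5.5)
The plan is to bound, for each prime $p$, the exponent of $P/O_p(G)$ by a number $f(\epsilon)$ that does not depend on $p$, where $P$ is a Sylow $p$-subgroup of $G$. Since $PF(G)/F(G)\cong P/O_p(G)$, this bounds the $p$-part of the exponent of $G/F(G)$. It also forces $P=O_p(G)$, hence $P\le F(G)$, for every $p>f(\epsilon)$. So the exponent of $G/F(G)$ divides $\prod_{p\le f(\epsilon)} p^{\lfloor \log_p f(\epsilon)\rfloor}$, which is $\epsilon$-bounded.

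\emph{Reduction to $O_p(G)=1$.} The hypothesis passes to quotients. If $N\trianglelefteq G$, then $TN/N=\gamma_\infty(G/N)$, images of Sylow subgroups of $T$ are Sylow subgroups of $TN/N$, and images of Hall $p'$-subgroups of $G$ are Hall $p'$-subgroups of $G/N$. Commuting probabilities do not decrease by Lemma \ref{subgroups} (1). So we may pass to $G/O_p(G)$ and assume $O_p(G)=1$. Then $F(G)=S_1\times\dots\times S_l$, where $S_i$ is the Sylow $p_i$-subgroup of $F(G)$ and every $p_i\ne p$. Write $T=\gamma_k(G)$ and $N_i=S_i\cap T\trianglelefteq G$.

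\emph{Bounding $|P:C_P(S_i)|$.} There are two cases.
\begin{itemize}
\item If $N_i=1$, then $[S_i,P,\overset{k-1}{\dots},P]\le N_i=1$. By coprime action this gives $[S_i,P]=1$.
\item If $N_i\ne1$, then $p_i\in\pi(T)$. The hypothesis gives a Sylow $p_i$-subgroup $Q$ of $T$ and a Hall $p_i'$-subgroup $H$ of $G$ with $\pr(Q,H)\ge\epsilon$. Since $N_i$ is normal in $T$, we have $N_i\le Q$. Moreover $H$ contains a conjugate $P^g$ of $P$. Lemma \ref{subgroups} (2) then gives $\pr(N_i,P^g)\ge\epsilon$. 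Conjugating by $g^{-1}$ and using $N_i\trianglelefteq G$, we get $\pr(N_i,P)\ge\epsilon$. Now apply Lemma \ref{boundindex} to $S_iP$. Its relevant subgroup is $S_i\cap\gamma_k(S_iP)$, which lies in $N_i$ because $\gamma_k(S_iP)\le\gamma_k(G)$. Hence $\pr(S_i\cap\gamma_k(S_iP),P)\ge\epsilon$, again by Lemma \ref{subgroups} (2). The lemma yields $|P:C_P(S_i)|\le f(\epsilon)$.
\end{itemize}

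\emph{Conclusion.} Let $p^m$ be the largest of the indices $|P:C_P(S_i)|$, so $p^m\le f(\epsilon)$. For every $x\in P$, the element $x^{p^m}$ centralizes every $S_i$, hence centralizes $F(G)$. By solubility $C_G(F(G))\le F(G)$, and $F(G)$ is a $p'$-group, so $x^{p^m}=1$. Thus $P$ has exponent at most $f(\epsilon)$, which is the required bound independent of $p$.

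The main obstacle is the mismatch between the hypothesis and Lemma \ref{boundindex}. The hypothesis pairs Sylow subgroups of $T$ with Hall subgroups of $G$, whereas the lemma needs a bound on $\pr(S_i\cap T,P)$ for a Sylow $p$-subgroup $P$ of $G$. This is bridged by the normality of $N_i$ in $G$, which allows $P^g$ inside $H$ to be conjugated back to $P$ without changing $N_i$. The case $N_i=1$, where $p_i\notin\pi(T)$ and the hypothesis says nothing, must be handled separately, as above.
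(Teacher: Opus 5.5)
Your proof is correct, but it takes a different route from the paper. The paper's proof is a two-line reduction. Take $p\in\pi(T)$, $P\in Syl_p(T)$ and a Hall $p'$-subgroup $H$ of $G$ with $\pr(P,H)\ge\epsilon$. Every $q\in\pi(G)\setminus\{p\}$ lies in $\pi(H)$, and a Sylow $q$-subgroup $Q$ of $H$ is a Sylow $q$-subgroup of $G$. Lemma \ref{subgroups} (2) then gives $\pr(P,Q)\ge\epsilon$. Hence $\pr^*(T,G)\ge\epsilon$, and Theorem \ref{expG/F(G)} (2) applies.

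You instead rerun, directly from the Hall hypothesis, the argument the paper uses to prove Theorem \ref{expG/F(G)} (2):
\begin{enumerate}
\item reduce to $O_p(G)=1$;
\item obtain $\pr(S_i\cap T,P)\ge\epsilon$;
\item apply Lemma \ref{boundindex} to $S_iP$;
\item conclude with $C_G(F(G))\le F(G)$.
\end{enumerate}

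The paper's route is shorter. It also shows that the Hall-type hypothesis implies $\pr^*(T,G)\ge\epsilon$, so the lemma is a formal corollary of the theorem.

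Your route is self-contained. Since $G$ is already soluble, it bypasses part (1) of Theorem \ref{expG/F(G)}, whose only role in the paper's proof of part (2) is to reduce to a soluble group. Your write-up also makes explicit two steps the paper passes over quickly:
\begin{enumerate}
\item the conjugation argument turning $\pr(N_i,P^g)\ge\epsilon$ into $\pr(N_i,P)\ge\epsilon$, using that $N_i$ is normal in $G$;
\item the observation that Lemma \ref{boundindex}, applied to $S_iP$, concerns $\gamma_k(S_iP)\le\gamma_k(G)$.
\end{enumerate}

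Your separate treatment of the case $N_i=1$ is harmless but unnecessary. In that case $\pr(N_i,P)=1\ge\epsilon$, so Lemma \ref{boundindex} applies exactly as in the other case.
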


\begin{proof} Let $P$ be a Sylow $p$-subgroup of $T$ and $H$ a Hall $p'$-subgroup of $G$ such that $\text{Pr}(P, H) \geq \ep$. Choose $q\in\pi(H)$. It follows from Lemma \ref{subgroups} (2) that $\pr(P,Q) \ge \epsilon$ for some Sylow $q$-subgroup $Q$ of $H$, which is also a Sylow $q$-subgroup of $G$. Therefore $\pr^*(T, G) \ge \epsilon$. Thus, by Theorem \ref{expG/F(G)}, the exponent of $G/F(G)$ is $\epsilon$-bounded. 
\end{proof}

Observe that Example \ref{exem1} also shows that under the hypothesis of Theorem \ref{newTh}  the index of $F(G)$ in $G$ can be arbitrarily large.

\section{Theorem  \ref{newTh}}

A proof of the next proposition can be found in \cite[Proposition 1.2]{ds22}. 

\begin{proposition} \label{prop1.2}
    Let $K$ be a subgroup of a group $G$, and let $\epsilon$ be a positive number such that $\pr(K,G) \ge \epsilon$. Then there is a normal subgroup $H \le G$ and a subgroup $B \le K$ such that the indices $|G:H|$ and $|K: B|$, and the order of the commutator subgroup $[H,B]$ are $\epsilon$-bounded.
\end{proposition}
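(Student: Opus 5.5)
We need to prove Proposition \ref{prop1.2}: if $\pr(K,G)\ge\epsilon$, then there are a normal $H\le G$ and a subgroup $B\le K$ such that $|G:H|$, $|K:B|$ and $|[H,B]|$ are all $\epsilon$-bounded.

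\emph{Step 1: large centralizers.} Apply Lemma \ref{lemma2.8} with the roles $H:=K$ and $K:=G$. This gives a subset $X\subseteq K$ such that $B=\langle X\rangle$ has index at most $2/\epsilon-1$ in $K$. Moreover, every $b\in B$ has $|G:C_G(b)|\le m$, where $m=(2/\epsilon)^{6/\epsilon}$. In particular every element of $B$ has at most $m$ conjugates in $G$.

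\emph{Step 2: bounded generation of $B$.} We want $B$ to be generated by boundedly many elements, each with at most $m$ conjugates. The plan is to choose generators $x_1,\dots,x_r\in X$ greedily, so that each new $x_i$ lies outside $\langle x_1,\dots,x_{i-1}\rangle$. Doubling order alone does not bound $r$, since $|B|$ is unbounded. So instead, track the subgroup $D_i=\bigcap_{j\le i}C_G(x_j)$, of index at most $m^i$ in $G$, and keep $B_i=\langle x_1,\dots,x_i\rangle$. If every element of $X$ centralizes a common subgroup of bounded index, we are done with Step 2.

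A cleaner route is the standard Neumann-type averaging argument, using $\pr(B,G)\ge\epsilon'$ with $\epsilon'$ bounded in terms of $\epsilon$ (which holds by Lemma \ref{subgroups}(2) in the reverse direction, since $|K:B|$ is bounded). This shows that, for some bounded $r$, the set $X$ contains elements $x_1,\dots,x_r$ with $C=\bigcap_i C_G(x_i)$ of $\epsilon$-bounded index in $G$, and such that $B_0=\langle x_1,\dots,x_r\rangle$ has bounded index in $B$. I would follow the approach of \cite{ds22}, which adapts the proof of Neumann's theorem.

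\emph{Step 3: build $H$ and bound $[H,B_0]$.} Take $H=\mathrm{core}_G(C)$. Its index in $G$ is bounded by $|G:C|!$. Now $H$ centralizes $x_1,\dots,x_r$, hence $H$ centralizes $B_0$. That already gives $[H,B_0]=1$, which would be too strong, so the bound must come from elsewhere. This signals that the real difficulty is getting $C$ of bounded index, rather than bounding $[H,B]$.

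The honest route is therefore the following. Each $x\in B_0$ has at most $m$ conjugates, so the normal closure of each $x_i$ in $G$ is generated by at most $m$ conjugates, each with boundedly many conjugates. By the B.H. Neumann/Wiegold-type result for groups generated by FC-elements, $[\langle x_i^G\rangle, G]$ has bounded order. Then $[G,B_0]\le\prod_i[\langle x_i^G\rangle,G]$ has bounded order because $r$ is bounded. So we can take $H=G$ and replace $B$ by $B_0$.

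\emph{Main obstacle.} The crux is Step 2: proving that $B$, up to bounded index, is generated by boundedly many elements of $X$, with the number $r$ depending on $\epsilon$ only. I would attempt it with the counting argument from Neumann's theorem. Choose $x_{i+1}\in X$ outside $B_i$ as long as $|B:B_i|$ is large. The key estimate is that the proportion of $b\in B$ commuting with a given $g$ stays at least $\epsilon$ on average, so for each $g$ the proportion of generators with $g\notin C_G(x_i)$ is controlled. This forces either small $|B:B_i|$ or bounded $r$.
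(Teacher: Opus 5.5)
The paper does not prove this statement; it quotes \cite[Proposition 1.2]{ds22}. So your argument has to stand on its own, and it does not. Step 1 is fine, but both routes you give for bounding $[H,B]$ rest on false claims.

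\emph{Step 2 fails.} Take $G=K$ elementary abelian of order $2^N$. Then $\Pr(K,G)=1$, yet any $r$ elements generate a subgroup of index at least $2^{N-r}$ in $B=K$. Had the claim held, it would give $[H,B_0]=1$ with bounded indices, which is false for extraspecial $2$-groups: they have $\Pr(G,G)>1/2$ but no abelian subgroup of bounded index.

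\emph{The Step 3 fallback also fails.} In $G=D_{2n}=\langle r,s\rangle$ the rotation $r$ has two conjugates, but $[\langle r^G\rangle,G]=\langle r^2\rangle$ has order at least $n/2$. For $N=\langle x^G\rangle$ one only gets $|N:Z(N)|\le m^m$, hence a bound on $|N'|$ by Schur, not on $[N,G]$. In fact the target $H=G$ is unattainable. For $K=\langle r\rangle$ one has $\Pr(K,G)\ge 1/2$. Yet for every $B\le K$ we have $[b,s]=b^{-2}$, so $|[G,B]|\ge |B|/2$, and $|[G,B]|\le c$ forces $|K:B|\ge n/(2c)$.

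\emph{The missing idea} is that $H$ must come from the other half of the hypothesis. If $|[H,B]|\le c$, then $|h^B|=|\{[h,b]:b\in B\}|\le c$ for every $h\in H$. So elements of $H$ must have boundedly many $B$-conjugates, and Step 1 gives nothing of this kind. Apply Lemma \ref{lemma2.8} also with the roles of $K$ and $G$ exchanged. This gives a subgroup of bounded index in $G$ whose elements $t$ satisfy $|K:C_K(t)|\le M=(2/\epsilon)^{6/\epsilon}$; let $T$ be its normal core in $G$, which still has bounded index.

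Then a Neumann-type maximal-class argument closes the gap using both bounds:
\begin{itemize}
\item Pick $a\in B$ with $m=|a^T|\le M$ maximal, write $a^T=\{a^{t_1},\dots,a^{t_m}\}$, and set $B_1=C_B(t_1,\dots,t_m)$. Then $|B:B_1|\le M^M$.
\item For $x\in B_1$, maximality gives $(xa)^T=\{xa^{t_i}\}$. Hence $[x,g]\in\{a^{t_i}(a^{t_j})^{-1}\}$ for all $g\in T$. This is a set of at most $M^2$ elements, each with at most $M^2$ conjugates in $G$.
\item So $W=[B_1,T]$ has $|G:C_G(W)|$ bounded, and by Schur $|W'|$ is bounded.
\item Set $B_2=B_1\cap C_G(W)$ and let $T_2$ be the core of $T\cap C_G(W)$. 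For $h\in T_2$ and $x\in B_2$ we get $[h,x^j]=[h,x]^j$. Hence each such commutator has order at most $M^2$.
\item Then $[T_2,B_2]\le W$ is generated by at most $M^2$ such elements, so modulo $W'$ it has bounded order. Therefore $|[T_2,B_2]|$ is $\epsilon$-bounded.
\end{itemize}
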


Our proof of Theorem \ref{newTh} also requires the following result from \cite{detomi2021finite}.

\begin{theorem}\label{Th3.1}
Let $G$ be a     group such that $|x^{\gamma_k(G)}| \le n$ for any $x \in G$. Then $\gamma_{k+1}(G)$ has     $(k,n)$-bounded order.    
\end{theorem}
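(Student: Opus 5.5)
This result is quoted from \cite{detomi2021finite} and is used here as a black box. If I had to prove it, I would follow the route of Neumann/Wiegold-type arguments for groups with bounded conjugacy classes (BFC-groups), adapted to the verbal subgroup $\gamma_k(G)$.

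\textbf{Case $k=1$.} Here the hypothesis $|x^G|\le n$ for all $x\in G$ says $G$ is a BFC-group. Neumann's theorem then gives that $G'$ is finite of $n$-bounded order, which is exactly the claim.

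\textbf{General $k$: strategy.} I would argue by induction on $k$, passing to the subgroup $\gamma_k(G)$.
\begin{itemize}
\item First, the hypothesis holds in particular for $x\in\gamma_k(G)$. Hence $\gamma_k(G)$ is itself a BFC-group, and by Neumann $[\gamma_k(G),\gamma_k(G)]$ has $n$-bounded order.
\item After factoring this normal subgroup out, I may assume $\gamma_k(G)$ is abelian. The orbit-size condition passes to quotients.
\item The goal then becomes to bound $|\gamma_{k+1}(G)| = |[\gamma_k(G),G]|$.
\end{itemize}

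\textbf{Key step.} For each $x\in G$, the map $g\mapsto [x,g]$ from $\gamma_k(G)$ takes at most $n$ values. Since $\gamma_k(G)$ is now abelian, this gives $|[\gamma_k(G),x]|\le n$.

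Next I would choose elements $x_1,\dots,x_r$ of $G$ such that $[\gamma_k(G),\langle x_1,\dots,x_r\rangle]$ is as large as possible. The standard Neumann ``maximal-family'' trick should force $r$ to be $n$-bounded. Then:
\begin{itemize}
\item The subgroup $C=\bigcap_i C_{\gamma_k(G)}(x_i)$ has $n$-bounded index in $\gamma_k(G)$.
\item Every element of $G$ acts on $C$ with commutators inside the bounded subgroup $M=[\gamma_k(G),x_1,\dots,x_r]$.
\end{itemize}

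Finally, I would use that $\gamma_k(G)$ is generated by commutators $[y_1,\dots,y_k]$. The aim is to show that $[C,G]\le M$ up to bounded extra pieces. Modulo $M$, the subgroup $C$ should behave as if central, and the bounded index of $C$ in $\gamma_k(G)$ should yield a bound on $[\gamma_k(G),G]$.

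\textbf{Main obstacle.} The hard step is that last one: turning ``$C$ has bounded index in $\gamma_k(G)$'' into a bound for $\gamma_{k+1}(G)$. Knowing that $\gamma_k(G)/C$ is small does not directly bound $[\gamma_k(G),G]$. One needs a Schur-type argument: if $\gamma_k(G)$ modulo a central piece is bounded, then $\gamma_{k+1}$ is bounded, much as in Baer's generalization of Schur's theorem. This is the point where the dependence on $k$ would enter.
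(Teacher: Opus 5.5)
The paper gives no proof of this statement. It imports it from \cite{detomi2021finite}, so treating it as a black box is exactly what the paper does. As a proof, however, your sketch has a genuine gap. Your reductions are fine: Neumann's theorem applied to the BFC-group $\gamma_k(G)$, then passing to $A=\gamma_k(G)$ abelian with $|[A,x]|\le n$ for all $x\in G$. From that point on, though, you only use that $A$ is an abelian normal subgroup with $|[A,x]|\le n$ for every $x$. That information cannot bound $r$ in any maximal-family argument, nor $[A,G]$.

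Here is a counterexample. Let $V=\mathbb{F}_p^{\,r+1}$ with basis $e_0,\dots,e_r$, and put $W=\langle e_1,\dots,e_r\rangle$. Let $B\cong W$ consist of the maps $\beta_w\colon e_0\mapsto e_0+w$, $e_i\mapsto e_i$ $(i\ge 1)$, and set $G=V\rtimes B$. Every element of $G$ acts on $V$ as some $\beta_w$, so $|[V,x]|=|\langle w\rangle|\le p$ for all $x\in G$. Yet $[V,G]=W$ has order $p^r$, and $[V,\langle\beta_{e_1},\dots,\beta_{e_j}\rangle]=\langle e_1,\dots,e_j\rangle$ keeps growing with $j$. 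Here $V\neq\gamma_k(G)$; indeed $\gamma_2(G)=W$ and $\gamma_3(G)=1$. So the fact that $A$ is generated by $k$-fold commutators must enter before the maximal-family step, not only in the final Schur/Baer step. You leave that final step open anyway, and it is not Baer's theorem, which concerns $|G:Z_k(G)|$.

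The missing idea is a bound on the $G$-conjugacy classes of the $k$-fold commutators. For $k=2$ it comes from the Hall--Witt identity $[x,y^{-1},g]^y[y,g^{-1},x]^g[g,x^{-1},y]^x=1$. Since $G/C_G(A)$ is abelian, the subgroups $[A,x]$ and $[A,y]$ are normal in $G$ and of order at most $n$. The identity then gives $[[x,y^{-1}],g]\in[A,x][A,y]$ for all $g$, so $|c^G|\le n^2$ for every commutator $c$.

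With such a bound, Neumann's trick does work, provided maximality is taken over single elements rather than over families:
\begin{enumerate}
\item Choose $x\in G$ with $|x^A|$ maximal.
\item Choose at most $\log_2 n$ commutators $c_1,\dots,c_t$ generating $A$ modulo $C_A(x)$, and put $U=\bigcap_i C_G(c_i)$, which has bounded index in $G$.
\item For $u\in U$ one gets $(ux)^A=u\,x^A$, and hence $[A,U]\le[A,x]$.
\item Pass to the core $U_0$ of $U$. Then $[A,G]$ is generated by $[A,U_0]$ together with boundedly many subgroups $[A,g_i]$ of order at most $n$, which bounds $\gamma_{k+1}(G)$.
\end{enumerate}

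For $k\ge 3$ the subgroups $[A,x]$ need not be normal. Bounding the classes of $k$-fold commutators then requires further work. That is the real content of the cited theorem, and your sketch does not address it.
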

\medskip

Now we are ready to prove Theorem \ref{newTh}, which we restate here for the reader's convenience: \medskip

  { \it Let $G$ be a group and $T=\gamma_k(G)$ for some $k\ge1$. Suppose that $\Pr(P,G)\ge\epsilon>0$ for every Sylow subgroup $P$ of $T$. Then $G$ has a nilpotent normal subgroup $R$ of nilpotency class at most $k+1$ such that both the exponent of $G/R$ and the order of $\gamma_{k+1}(R)$ are $\epsilon$-bounded. }

\begin{proof}
Whenever $P$ is a Sylow subgroup of $T$ and $Q$ is a Sylow subgroup of $G$, by Lemma \ref{subgroups} (2) we have that $\pr(P,Q) \ge \epsilon$. Thus, by Theorem \ref{expG/F(G)} the exponent of $G/F(G)$ is $\epsilon$-bounded. Set $F = F(G)$. For each $p\in\pi(F)$ let 
 $S_p$ be the Sylow $p$-subgroup of $F$ and $U_p=T\cap S_p$.
 
Again,  it follows from Lemma \ref{subgroups} (2)  that $\pr(U_p, G) \ge \epsilon$. Hence, Proposition \ref{prop1.2} tells us that there is a normal subgroup $H \le G$ and a subgroup $B_p\le U_p$ such that the indices $|G:H|$ and $|U_p:B_p|$ and the order of the commutator subgroup $[H,B_p]$ are $\epsilon$-bounded. Set $H_p=H\cap S_p$. Clearly, $|S_p:H_p|$ and the order of $[H_p,B_p]$ are $\epsilon$-bounded. In particular, $|x^{B_p}|$ is $\epsilon$-bounded for every $x\in H_p$. Since $B_p$ has $\epsilon$-bounded index in $U_p$ and $\gamma_k(S_p) \le U_p$, we deduce that $|x^{\gamma_k(S_p)}|$ is $\epsilon$-bounded for every $x \in H_p$. Now, Theorem \ref{Th3.1} implies that $\gamma_{k+1}(H_p)$ has $\epsilon$-bounded order. 
Set $L_p = C_{H_p}(\gamma_{k+1}(H_p))$, and note that $|H_p:L_p|$ is $\epsilon$-bounded. Since $H$ and $S_p$ are normal in $G$, so is $L_p$. Therefore, $L_p$ is a normal nilpotent subgroup of $G$ with nilpotency class at most $k+1$ such that $|S_p: L_p|$  and the order of $\gamma_{k+1}(L_p)$ are $\epsilon$-bounded.

Since the bounds on $|S_p:L_p|$ and $|\gamma_{k+1}(L_p)|$ do not depend on $p$, it follows that there is an $\epsilon$-bounded constant $C$ such that $S_p = L_p$ and $\gamma_{k+1}(L_p) = 1$ whenever $p \ge C$. 
 Set $R = \prod_{p \in \pi(F) } L_p $. Thus, all Sylow subgroups of $F/R$ have $\epsilon$-bounded order and are trivial for primes larger than $C$. 
Therefore the index of $R$ in $F$ is $\epsilon$-bounded. Moreover, $R$ is of class at most $k+1$ and $|\gamma_{k+1}(R)|$ is $\epsilon$-bounded. Since $G/F$ has $\epsilon$-bounded exponent, we conclude that $G/R$ has $\epsilon$-bounded exponent. This completes the proof.
\end{proof}

\section{Theorems \ref{genfit} and \ref{genfit2}} 

In this section, we prove the last two theorems, which analyse the influence of the commuting probabilities of Sylow subgroups of $G$ and coprime Sylow subgroups of $F^*(G)$ (resp., $F_2^*(G)$).

\begin{proof}[Proof of Theorem \ref{genfit}] 

Part (1). Recall that $T=F^*(G)$ and $\pr^*(T,G)\ge\epsilon>0$. We need to prove that 
the exponent of $G/F(G)$ is $\epsilon$-bounded.

Write $F^*(G)=EF$, where $E$ is the product of subnormal quasisimple subgroups of $G$ and $F=F(G)$. It follows from Lemma \ref{star} that 
$$\pr^*(E/Z(E), E/Z(E)) \ge \pr^*(T, G) \ge \epsilon .$$

So we can apply Lemma \ref{Eb} to 
$E/Z(E)$ and conclude that it has $\epsilon$-bounded order. By Schur's theorem, $E$ has $\epsilon$-bounded order, and so $C=C_G(E)$ has $\epsilon$-bounded index in $G$. Moreover, observe that $F^*(C) = F$. 
 Hence, it is sufficient to show that if $P$ is a Sylow $p$-subgroup of $C$, then the exponent of $P/O_p(C)$ is $\epsilon$-bounded. 

Let $P_0 = C_P(O_{p'}(F))$. First, we will show that $P/P_0$ has $\epsilon$-bounded exponent. Write 
$$O_{p'}(F) = S_1 \times \cdots \times S_l,$$ 
where $S_i$ is a Sylow $p_i$-subgroup of $O_{p'}(F)$. 
By hypothesis we know that $\pr(P, S_i) \ge \epsilon$. Therefore from Lemma \ref{boundindex} we obtain that $|P : C_P(S_i)|$ is $\epsilon$-bounded. Thus, for any $S_i$ the index of $C_P(S_i)$ in $P$ is $\epsilon$-bounded, say at most $m$. Since $m$ does not depend on $p$ or $i$, it follows that the exponent of $P/P_0$ is $\epsilon$-bounded, as required.

Let $G_1=\langle P_0^C\rangle$ be the normal closure of $P_0$. As $F(G_1) = G_1 \cap F$ we observe that $O_{p'}(F(G_1)) \le O_{p'}(F)$. Hence, $O_{p'}(F(G_1))$ is centralised by $P_0$ and we obtain that $O_{p'}(F(G_1)) \le Z(G_1)$. 

Obviously, we have $F({G_1}/{Z(G_1)}) ={F(G_1)}/{Z(G_1)}$ so we can factor out $Z(G_1)$ and assume that $O_p(G_1)=F(G_1)$.
 
Let $P_1 = F(G_1)$. Since $G_1$ and $P_1$ are normal subgroups of $C$, it follows from Lemma \ref{star} (1) that 
$$\pr^*(P_1,G_1) \ge \pr^*(F,G) \ge \epsilon.$$ 
Thus, for every prime $q \in \pi(G_1)$ distinct from $p$ there is a Sylow $q$-subgroup $Q$ of $G_1$  such that $\pr(P_1,Q) \ge \epsilon$. 
It follows from Lemma \ref{boundindex} that $|Q : C_Q(P_1)|$ is $\epsilon$-bounded.
Keeping in mind that $F = F^*(C)$, we deduce that $P_1 = F^*(G_1)$. This implies that $ C_{G_1}(P_1) \le P_1$, so we must have $C_Q(P_1) = 1$. Hence, $Q$ has $\epsilon$-bounded order. 

Let $P_2$ be a Sylow $p$-subgroup of $G_1$. We have shown that, for any $q \in\pi(G_1)\setminus\{p\}$, every Sylow $q$-subgroup $Q$ of $G_1$ has $\epsilon$-bounded order, say at most $n$. In particular, $Q = 1$ for all primes $q > n$. Since $n$ does not depend on $q$ and is $\epsilon$-bounded, it follows that index of $P_2$ in $G_1$ is $\epsilon$-bounded.
 Moreover, observe that $O_p(G_1) = P_1$ is the normal core of $P_2$, and this implies that $P_1$ has $\epsilon$-bounded index in $G_1$. Therefore, $P/P_1$ has $\epsilon$-bounded exponent. Since $P_1 \le O_p(C)$, it follows that $P/O_p(C)$ has $\epsilon$-bounded exponent, as claimed. 

 We have shown that, for every $p \in \pi(C)$ and every Sylow $p$-subgroup $P$, the exponent of $P/O_p(C)$ is $\epsilon$-bounded, say at most $m$. In particular, $P = O_p(C) \le F(C)$ for all primes $p > m$. Since $m$ does not depend on $p$ and is $\epsilon$-bounded, it follows that the exponent of $C/F$ is $\epsilon$-bounded. Therefore, $G/F(G)$ has $\epsilon$-bounded exponent, as required.

Part (2).
    Recall that $T=F^*(G)$, $\pr^*(T,G)\ge\epsilon>0$ and $s = |\pi(F(G))|$. We need to prove that 
the index $|G:F(G)|$ is $(s, \epsilon)$-bounded. 
We use the same notation as above. Write
$$O_{p'}(F) = S_1 \times \cdots \times S_l,$$ 
where $S_i$ is a Sylow $p_i$-subgroup of $O_{p'}(F)$ and $l \le s$. 

The proof of the first part of the theorem 
  shows that  $|P:C_P(S_i)|$ is $\epsilon$-bounded. Hence, the index $|P:P_0|$ is $(s, \epsilon)$-bounded, where $P_0 = C_P(O_{p'}(F))$. 
  Following the same path as in the proof of Part (1), 
  we conclude that the index $|P: O_p(G)|$ is $(s, \epsilon)$-bounded, say at most $m$. Thus, for every $p > m$ we must have $P = O_p(G) \le F(G)$. Therefore, the index $|G:F(G)|$ is $(s,\epsilon)$-bounded. 
   \end{proof}

%%%%%%%%%% exemple %%%%%%%%%%%%
Let us show that under the hypothesis of Theorem \ref{genfit} the index $|G:F^*_2(G)|$ can be arbitrarily large. 

\begin{example}\label{exem2}   
{\rm Let $S=S_5$ and $A = A_5 \le S$ be the symmetric group and the alternating group of degree 5, respectively. Let $\pi = \{ p_1,\dots,p_s \}$ be a set of distinct primes, each at least 7. For $i=1,\dots,s$ let $C_i$ be the cyclic group of order $p_i$ and set $G_i=C_i\wr S$. Let $G=\prod_iG_i$ be the direct product of $G_i$. Observe that $F^*_2(G_i) = C_i\wr A$ and $F^*(G) = F(G) = O_\pi (G) $. 
Let $P$ be a Sylow $p$-subgroup of $F(G)$ and $Q$ be a Sylow $q$-subgroup of $G$, for some prime $q$ distinct from $p$. 
 We know that 
$$\Pr(P,Q) = \frac{1}{|P|} \sum_{y \in P} \frac{|C_Q(y)|}{|Q|} =  \frac{1}{|P|} \sum_{y \in P} \frac{1}{|y^Q|}.$$

If $q \in \pi$, then $\Pr(P,Q) = 1$. If $q = 2,3$ or $5$, then $|y^Q|\leq8$, with $|y^Q|=8$ happening when $Q \cong D_8$ is the dihedral group of order 8.  So in any case we have  $\pr(P,Q) \ge 1/8$.  Therefore, $\pr^*(F^*(G),G)\geq 1/8$. On the other hand, $|G:F^*_2(G)|=2^s$, which tends to infinity when $s$ does.}
\end{example}
%%%%%%%%%% exemple %%%%%%%%%%%%

\medskip

%%%%%%%%%%%% Theorem 1.5 %%%%%%%%%%%%%%

Now we are ready to prove Theorem \ref{genfit2}.

\begin{proof}[Proof of Theorem \ref{genfit2}] 
Recall that $G$ is a finite group and   $\pr^*(F^*_2(G),G) \ge \epsilon>0$. We need to prove that the index of $|G : F_2(G)|$ is $\epsilon$-bounded. 

Let $F= F_1(G)$. 
 As 
 \[\pr^*(F^*(G), G) \ge \pr^*(F^*_2(G), G) \ge \epsilon,\] 
 it follows from Theorem \ref{genfit} (1) that the exponent of $G/F$ is $\epsilon$-bounded. Whence, the number of primes dividing $|G/F|$ is $\epsilon$-bounded: set $s = |\pi(G/F)|$.

Let  $M$ be the full preimage of $F^*(G/F)$. 
  Note that  $M \le F_2^*(G)$. 
By applying twice Lemma \ref{star} we get 
\[
 \pr^*(M/F, G/F) \ge \pr^*(M, G) \ge \pr^*(F^*_2(G), G) \ge \epsilon. 
 \]
Hence, we can apply Theorem \ref{genfit} (2) to the group $G/F$ and conclude that the index of $F_2(G)/F$ in $G/F$ 
 is $(s, \epsilon)$-bounded. Since $s$ is also $\epsilon$-bounded, the result follows.
\end{proof}

\section{Acknowledgment}  This work was done during a visit of the second and the third authors to the Department of Mathematics of the University of Padova. They thank the department for the hospitality. They also acknowledge financial support from CNPq and, in case of the third author, from the 
 \lq\lq National Group for Algebraic and Geometric Structures, and their Applications\rq\rq (GNSAGA - INDAM). The first author is a member  of GNSAGA (INDAM).

\bibliographystyle{abbrv}

\end{document}